\newtheorem{theorem}{Theorem}[section]
\newtheorem{definition}[theorem]{Definition}
\newtheorem{lemma}[theorem]{Lemma}
\newtheorem{proposition}[theorem]{Proposition}
\newtheorem{corollary}[theorem]{Corollary}
\newcommand{\N}{{\mathbb N}}
\newcommand{\Z}{{\mathbb Z}}
\newcommand{\F}{{\mathbb F}}
\newcommand{\C}{{\mathbb C}}
\newcommand{\T}{{\mathbb T}}
\newcommand{\I}{{\mathbb I}}
\renewcommand{\P}{{\mathbb P}}
\def\cal{\mathcal}
\newcommand{\cala}{{\cal A}}
\newcommand{\calo}{{\cal O}}
\newcommand{\calp}{{\cal P}}
\newcommand{\calt}{{\cal T}}
\newcommand{\aut}[0]{ \mbox{\rm{Aut}}}
\newcommand{\set}[2]{\{ \, #1 \,\, | \, \, #2 \, \} }
\newcommand{\lin}[0]{ \mbox{\rm{span}}}
\begin{document}

\title{A Cuntz--Krieger uniqueness theorem for semigraph $C^*$-algebras}
\author[B. Burgstaller]{B. Burgstaller}
\address{Doppler Institute for mathematical physics, Trojanova 13, 12000 Prague, Czech Republic}
\email{bernhardburgstaller@yahoo.de}
\subjclass{46L05, 46L55}
\keywords{higher rank, semigraph algebra, graph, ultragraph, labelled graph, Cuntz--Krieger uniqueness, aperiodicity}
%

\begin{abstract}
Higher rank semigraph algebras are introduced by mixing concepts of ultragraph algebras
and higher rank graph algebras.
This yields a kind of higher rank generalisation of ultragraph algebras.
We prove Cuntz--Krieger uniqueness theorems for cancelling semigraph algebras and
aperiodic full semigraph algebras.
%
\end{abstract}

\maketitle


\section{Introduction}

Tomforde's ultragraph algebras \cite{tomforde} and Bates and Pask's $C^*$-algebras of labelled graphs \cite{batespask}
are $C^*$-algebras which generalise graph algebras \cite{kumjianpaskraeburnrenault}
by introducing - beside a directed graph - a further projection set which allows higher flexibility to design the $C^*$-algebra.
For instance, Exel--Laca algebras \cite{exellaca} are ultragraph algebras according to Tomforde \cite{tomforde}, but are only proved to be Morita equivalent to graph algebras \cite{katsuremuhlysimstomforde}.
In another direction, graph algebras by Enomoto and Watatani \cite{enomotowatatani}
and Kumjian, Pask, Raeburn and Renault \cite{kumjianpaskraeburnrenault}
were generalised to higher rank graph algebras by Kumjian and Pask in \cite{kumjianpask}
and Raeburn, Sims and Yeend in \cite{raeburnsimsyeendFinitelyAligned}.
A central result for Cuntz--Krieger algebras \cite{cuntzkrieger}, ultragraph algebras and labelled graph $C^*$-algebras is the existence of a Cuntz--Krieger uniqueness theorem,
firstly proved for the Cuntz--algebras \cite{cuntz}.

In this work we extend Tomforde's concept \cite{tomforde} of allowing an extra projection set in the construction of the algebra
to higher rank graphs. Such a graph algebra will be called a higher rank semigraph algebra, see Definition \ref{DefSemigraphAlgebra}.
We do not use a strict concept by decorating the graph, but use a slightly more flexible concept
by allowing the algebra to be generated by partial isometries coming from a higher rank semigraph (Definition \ref{Defsemigraph})
and a projection set, and mix it with relations which are reminiscent of the relations of higher rank graph algebras \cite{kumjianpask}.
Then ultragraph algebras, $C^*$-algebras of labelled graphs and higher rank Exel--Laca algebras \cite{burgiHREL} are
higher rank semigraph algebras.
We prove a Cuntz--Krieger uniqueness theorem for cancelling semigraphs (Definition \ref{defCancel}) in Theorem \ref{theoremamenablesemigraphsystem}.

A side benefit of higher rank semigraph algebras
is that they are stable under quotients (provided the quotient allows a gauge action),
see Lemma \ref{lemmaQuotientSemigraphAlgebra}, and so are predestinated for studying quotients (see also \cite{burgievansProperties}).
In the theory of graph algebras one has to go over to relative graph algebras as studied by
Sims \cite{sims} when dealing with quotients.

In Section \ref{SectionFullSemiAlgebra}
we associate to every semigraph algebra another semigraph algebra, called the full semigraph algebra, by adding relations
which are analogs to Cuntz' relation
$s_1 s_1^* + s_2 s_2^*=1$ in the Cuntz algebra $\calo_2$.
The main result of this section is that an aperiodic full semigraph algebra (Definition \ref{definitionAperiodicity}) is cancelling, see Proposition \ref{fullaperiodicsystemamenable}, and so statisfies
the Cuntz--Krieger uniqueness theorem according to Theorem \ref{theoremamenablesemigraphsystem}. Our aperiodicity condition may be compared with Cuntz and Krieger's aperiodicity
condition in \cite{cuntzkrieger} or Lewin and
Sim's aperiodicity condition in \cite{simsAperiodicity} for higher rank graph algebras.

We give a brief overview of this paper.
In Sections \ref{sectionSemimultiSet}-\ref{SectionSemigraph} we introduce the notion of a finitely aligned $k$-semigraph.
In Sections \ref{SectionSemigraph}-\ref{SectionsSemigraphAlgebraCore} we define higher rank semigraph algebras and make sufficient
analysis (in particular of the core) to be prepared for the proof of the Cuntz--Krieger uniqueness theorem, Theorem \ref{theoremamenablesemigraphsystem}, for
cancelling semigraph algebras in Section \ref{SectionCuntzKriegerUniqueness}.
In Section \ref{SectionSemigraphQuotient} we state stability under quotients,
and in Section \ref{SectionFullSemiAlgebra} we discuss full semigraph algebras.

\section{Semimultiplicative sets}  \label{sectionSemimultiSet}

In higher rank graph $C^*$-algebra theory \cite{kumjianpask} a
graph is a small category.
We are going to introduce higher rank semigraph $C^*$-algebras
which are relying on a similar but more general structure
%
called a semimultiplicative set.

\begin{definition}  \label{definitionSemimultiSet}
{\rm
A {\em semimultiplicative set} $T$ is a set equipped with
a subset $T^{(2)} \subseteq T$ and a multiplication
$$T^{(2)} \longrightarrow T : (s,t) \mapsto st,$$
which is associative, that is, for all $s,t,u \in T$,
$(s t) u$ is defined if and only if $s (t u)$ is defined, and
both expressions are equal if they are defined.
}
\end{definition}

When we say $(s t) u$ is defined then we mean $(s,t) \in T^{(2)}$ and
$(st,u) \in T^{(2)}$.
An example which is relevant for us is the semimultiplicative set $\Lambda$ which is a graph \cite{kumjianpask}.
Then the product $\lambda \mu$ of two elements of $\Lambda$ is defined if and only if
$s(\lambda) = r(\mu)$. A graph is even a semi-groupoid \cite{exelSemigroupoid}. A semi-groupoid is a semimultiplicative set
with the property that $(s t) u$ is defined if and only if both $st$ and $tu$ are defined.
The second example - and this comes closer to what we do here in this paper -
is to think of a semimultiplicative set as a graph $\Lambda$ without the projections,
so the set $T=\Lambda \backslash \Lambda^{(0)}$.

\section{Semigraphs}   \label{SectionSemigraph}

We define $\N_0 = \{0,1,2,\ldots\}$ and denote by $\T$ the unit circle.
In this paper $k$ denotes an index set.
If $k$ is infinite then $\Z^k$
denotes the set of
functions $f:k \rightarrow \Z$ with {\em finite} support
(and similarly $\N_0^k$ and $\T^k$ denote the set of functions with finite support).

\begin{definition}  \label{Defsemigraph}
{\rm
Let $k$ be an index set (which may be regarded as a natural number if $k$ is finite).
A {\em $k$-semigraph} $T$ is a semimultiplicative set $T$ equipped with a map
$d:T \longrightarrow \N_0^k$ satisfying the {\em unique
factorisation property} which consists of the following two conditions:

(1) For all $x,y \in T$ for which the product $xy$ is defined one has
$$d(xy)= d(x)+d(y).$$
(2) For all $x \in T$ and all $n_1,n_2 \in \N_0^k$ with $d(x)
= n_1+n_2$ there exist unique $x_1,x_2 \in T$ with $x = x_1 x_2$ satisfying $d(x_1)=n_1$ and $d(x_2)=n_2$.

The map $d$ is called the {\em degree map}.}
\end{definition}

Often we shall call a $k$-semigraph $T$ just a semigraph when $k$ is unimportant or clear from the context.
We shall occasionally denote the degree $d(t)$ of an element $t$ in a $k$-semigraph also by $|t|$.
We denote the set of all elements of $T$ with degree $n$
by $T^{(n)}$ ($n \in \N_0^k$).
The cut-down $k$-semigraph $T^{(\le n)}$
is the $k$-semigraph consisting of all elements of $T$ with degree less or equal to
$n$.



\begin{definition}
{\rm
If $x \in T$ and $0 \le n_1
\le n_2 \le d(x)$ then there are unique $x_1,x_2,x_3 \in T$ such that $x = x_1
x_2 x_3$, $d(x_1)=n_1, d(x_2)= n_2-n_1$ and $d(x_3) = d(x)-n_2$.
$x_2$ will
be denoted by $x(n_1,n_2)$.
}
\end{definition}


\begin{definition} 
{\rm A $k$-semigraph $T$ is called {\em finitely aligned} if for all
$x,y \in T$ the
{\em minimal common extension} of $x$
and $y$, which is the set
\begin{eqnarray*}
T^{(\min)}(x,y) &=& \{(\alpha,\beta) \in T \times T |\,
\mbox{$x \alpha$ and $y \beta$ are defined},\\
&& x \alpha = y \beta,\, d(x \alpha) =
d(x) \vee d(y) \},
\end{eqnarray*}
is finite. }
\end{definition}

The last definition is a straight generalisation of finitely alignment in graphs (\cite{raeburnsimsyeendFinitelyAligned}).


\begin{lemma} \label{faclosure}
Let $\Lambda$ be a finitely aligned semigraph. For every finite
subsets $E$ of $\Lambda$ there exists a finite subset $F$ of
$\Lambda$ containing $E$ such that the following implication holds.
$$
\Big ( x_1,x_2,y_1,y_2 \in F, \,\, d(x_1)=d(x_2),\,\,
d(y_1)=d(y_2),
\,\, (\alpha,\beta) \in \Lambda^{(\min)}(x_1,y_1) \Big)$$
\begin{equation}
\Longrightarrow \Big ( x_2 \alpha \in F \,\, \mbox{if}\,\, x_2
\alpha \mbox{ is defined}, \,\, y_2 \beta \in F \,\, \mbox{if} \,\, y_2
\beta \mbox{ is defined} \Big ) \label{faclosurecondition}
\end{equation}

\end{lemma}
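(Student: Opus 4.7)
The plan is to build $F$ as the iterative closure of $E$ under the operation of (\ref{faclosurecondition}). Set $F_0:=E$ and, for each $n\ge 0$, let $F_{n+1}$ be $F_n$ together with every defined product $x_2\alpha$ or $y_2\beta$ arising from a quadruple $(x_1,x_2,y_1,y_2)\in F_n^{\,4}$ satisfying the degree hypotheses and a pair $(\alpha,\beta)\in\Lambda^{(\min)}(x_1,y_1)$. Put $F:=\bigcup_n F_n$; by construction $E\subseteq F$ and $F$ satisfies (\ref{faclosurecondition}). It remains to show $F$ is finite.

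A first reduction bounds the possible degrees. Let $D\subseteq\N_0^k$ denote the join-sub-semilattice generated by $d(E)$. Since $\vee$ is idempotent and $d(E)$ is finite, every element of $D$ is a join of some subset of $d(E)$, so $|D|\le 2^{|d(E)|}$ and $D$ is finite. A straightforward induction on $n$ using clause~(1) of the unique factorisation property yields $d(F_n)\subseteq D$, as any newly adjoined $x_2\alpha$ has degree $d(x_1)+d(\alpha)=d(x_1)\vee d(y_1)\in D$. The same induction shows each $F_n$ is finite, since only finitely many quadruples are available at stage $n$ and each $\Lambda^{(\min)}(x_1,y_1)$ is finite by finite alignment.

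A useful consequence of clause~(2) of the unique factorisation property is that $\Lambda^{(\min)}(x_1,y_1)=\emptyset$ whenever $x_1\ne y_1$ share the same degree: any pair in it would have $d(\alpha)=d(\beta)=0$ and would give two distinct $(d(x_1),0)$-factorisations $(x_1,\alpha)$ and $(y_1,\beta)$ of the same element $x_1\alpha=y_1\beta$, contradicting uniqueness. Hence new elements of degree $m$ can only arise either from same-element pairs $x_1=y_1\in F$ extended by a degree-zero factor, or from pairs with $d(x_1)\ne d(y_1)$ and $d(x_1)\vee d(y_1)=m$.

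The main obstacle is termination of the chain $(F_n)$, which I would establish by induction on $m\in D$ along a linear extension of the componentwise order, proving finiteness of each stratum $F^{(m)}:=F\cap d^{-1}(m)$. For minimal $m$ in $D$ (necessarily lying in $d(E)$), no strictly smaller degree is available, the cross-degree case is vacuous, and the same-element operation is controlled by finite alignment applied to each $\Lambda^{(\min)}(x_1,x_1)$, so $F^{(m)}$ is finite. For the inductive step, the cross-degree operations involve at least one factor from a strictly smaller stratum (finite by the inductive hypothesis) together with finitely many extensions from finite alignment, yielding only finitely many new elements of degree $m$; the same-element operations within the $m$-stratum contribute only finitely many further elements, since their ``extending'' factors lie in $\Lambda^{(\min)}(x_1,x_1)$ (finite by finite alignment) and the pool of available $x_1$'s stabilises once the bounded cross-degree contributions have been exhausted. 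Summing finite contributions over the finite set $D$ gives finiteness of $F$.
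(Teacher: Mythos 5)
Your route is genuinely different from the paper's: the paper reduces to \cite[Lemma 3.2]{raeburnsimsyeendFinitelyAligned}, whose explicit one-shot construction (all concatenations of segments of a one-round closure $\vee E$ along increasing degrees) makes finiteness manifest and puts the work into verifying closure, whereas you take the iterative closure, where closure is manifest and the work lies in proving termination. Your first reduction is fine: all degrees occurring in $F$ lie in the finite join-semilattice $D$ generated by $d(E)$, each $F_n$ is finite, and your observation that $\Lambda^{(\min)}(x_1,y_1)=\emptyset$ for distinct $x_1\neq y_1$ of equal degree is a correct application of the unique factorisation property. The problem is that the termination argument, which is the actual content of the lemma on this route, is not carried out.

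Concretely, the gap is in the inductive step bounding the stratum $F^{(m)}$. A cross-degree operation producing a new element of degree $m$ need not draw both of $x_1,y_1$ from strictly smaller strata: with $d(x_1)<m=d(y_1)$ one has $d(x_1)\vee d(y_1)=m$, the pair $(\alpha,\beta)\in\Lambda^{(\min)}(x_1,y_1)$ has $d(\beta)=0$ and $\alpha$ essentially a terminal segment of $y_1$, and the newly adjoined $x_2\alpha$ lies in the $m$-stratum while depending on a $y_1$ that also lies in the $m$-stratum. That new element is then admissible as a $y_1$ in the next round (already for $k=2$ and $m=(1,1)$ one can alternately replace initial segments of degree $(1,0)$ and $(0,1)$, producing a fresh degree-$(1,1)$ element each time). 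So the pool of admissible $y_1$'s grows with each round; finite alignment bounds each individual $\Lambda^{(\min)}(x_1,y_1)$ but not the number of rounds, and the assertion that ``the pool of available $x_1$'s stabilises once the bounded cross-degree contributions have been exhausted'' is precisely what needs proof and is left unjustified. (A similar unaddressed feedback occurs with the degree-zero extensions coming from $\Lambda^{(\min)}(x_1,x_1)$ when $\Lambda$ has several elements of degree zero, which the general statement of the lemma allows.) To close the gap you would need an a priori finite superset of each $F^{(m)}$ --- for instance, showing that every element your process produces is a concatenation of segments of elements of the one-round closure of $E$ along an increasing chain of degrees in $D$ --- but that is essentially the construction of \cite[Lemma 3.2]{raeburnsimsyeendFinitelyAligned} which the paper invokes directly.
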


\begin{proof}
If $\Lambda$ is a graph then this lemma is a restatement of \cite[Lemma 3.2]{raeburnsimsyeendFinitelyAligned}.
If $\Lambda$ is a graph $\Gamma$ without the idempotent set $\Gamma^{(0)}$, so $\Lambda=\Gamma\backslash \Gamma^{(0)}$, then the assertion of this lemma follows also directly from \cite[Lemma 3.2]{raeburnsimsyeendFinitelyAligned} by applying it to the graph $\Gamma$.
If $\Lambda$ is none of these cases then this lemma
may be proved along the lines of \cite[Lemma 3.2]{raeburnsimsyeendFinitelyAligned}
with obvious adaption: one always takes into account whether a given product
in $\Lambda$ is defined and restricts to the defined products.
For example, instead of the definition of the set $E_i$ given in
the proof of \cite[Lemma 3.2]{raeburnsimsyeendFinitelyAligned}, one uses
\begin{eqnarray*}
E_i &=& \{\,x=\lambda_1(0,d(\lambda_1)) \ldots \lambda_j ( d(\lambda_{j-1}),d(\lambda_j)) \, |\,
\lambda_l \in \vee E_i,\\
&& x \mbox{ exists}, \, d(\lambda_l ) \le d(\lambda_{l+1}) \mbox{ for } 1 \le l \le j \, \}
\end{eqnarray*}
\end{proof}

%

\begin{definition}
{\rm
$T$ is called a {\em non-unital} $k$-semigraph if there exists a $k$-semigraph $T_1$ which has a unit $1 \in T_1$ such that
$T= T_1 \backslash \{1\}$.
}
\end{definition}

Suppose that $T$ is a non-unital $k$-semigraph.
Then $d(1)=0$ in $T_1$ since we have $d(1)=d(11)= d(1)+d(1)$.
Moreover, by the unique factorisation property in $T_1$
the identity $t = 1 t = t 1$ yields that $1$ is the only element in $T_1$ which has degree zero.
Consequently we have
$d(t)>0$ for all $t \in T$.

\section{The degree of a word}  \label{SectionDegreeWord}


The setting of this section is as follows.
$\calp$ is a set and $\calt$ is a $k$-semigraph or a non-unital $k$-semigraph.
%
%
%
%
%
$\F$ denotes the free non-unital $*$-algebra generated by the letter set $\calt \cup \calp$.
In other words, $\F$ is the vector space over the complex numbers
with base being all non-empty formal {\em words}
$a_1^{\epsilon_1} \ldots a_n^{\epsilon_n}$
($n \ge 1$) in the letters $a_i \in \calt \cup \calp$. Here $\epsilon_i \in \{1,*\}$.
Multiplication and taking adjoints within $\F$ is done formally.

\begin{definition} \label{degreeMap}
{\rm
The {\em degree}
$d(x)$ of a {word} $x=x_1 \ldots x_n$ in $\F$ ($n \ge 1$, $x_i
\in \calp \cup \calt \cup \calp^* \cup \calt^*$) is defined to be
$$d(x)=d(x_1) + \ldots + d(x_n),$$
where $d(x_i)$ is to be the
semigraph-degree $d(x_i)$ when $x_i \in \calt$, $d(x_i) = 0$ if $x_i \in
\calp$, and $d(x_i^*) = - d(x_i)$ for any $x_i \in \calt \cup \calp$.
}
\end{definition}

Since this degree map extends the degree map for $\calt$, we use the same notation $d$.
Note that the last definition is unambiguous: by the unique factorisation property in $\calt$ we have
$$d(x) = d(st) = d(s) + d(t)$$
for any decomposition $x =s t$ of $x,s,t \in \calt$ in $\calt$, and this is all we had to check.
The degree map satisfies the following formulas:
$$d(w v) = d(w)+ d(v) \qquad \mbox{and} \qquad d(w^*) = -d(w)$$
for all nonzero words $w$ and $v$ with $wv \neq 0$ in the first identity.
In general we may call such a map a degree map, even without the special form given in Definition
\ref{degreeMap}.
Note also that in order that Definition \ref{degreeMap} is without contradiction we need to have
that the intersection $\calp \cap \calt$, if non-empty, is a subset of $\calt^{(0)}$.

In this chapter we shall write $W_n$ for the words with degree $n \in \Z$.
%
%
Having the degree map $d$, we may write $\F$ as a direct sum of fibers,
where a fiber $\F_n$ is the linear span of all words with degree $n$, i.e.
we may write
\begin{equation} \label{directsumrep}
\F \cong \bigoplus_{n \in \Z^k} \F_n = \bigoplus_{n \in \Z^k} \lin(W_n).
\end{equation}

\begin{definition}  \label{defFiberSPace}
{\rm
We call the set
$F= \bigcup_{n \in \Z^k} \F_n = \bigcup_{n \in \Z^k} \lin(W_n)$
the {\em fiber space of $\F$}.
}
\end{definition}

Consider a quotient $X=\F/\I$ of $\F$. A word $w \in \F$ will also be called
a word in the quotient $X$, so $w + \I$ is called a word in $X$ when $w$ is a word in $\F$.
Assume that we are given a two-sided self-adjoint ideal $\I$ of $\F$ which is generated by
some subset of the fiber space. Then the quotient $\F/\I$ inherits the degree map from $\F$ as we are going to prove:

\begin{lemma}   \label{lemmadegreemap}
The degree map $d$ for words in $\F$ induces a well defined degree
map for the nonzero words in $X$
when $X$ is a quotient of $\F$ by a subset of the fiber space.
(Formula: $d(x+\I)= d(x)$.)
\end{lemma}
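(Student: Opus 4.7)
The plan is to exploit the direct sum decomposition in (\ref{directsumrep}) and promote it from a vector space decomposition to a $\Z^k$-grading of $\F$ as a $*$-algebra, so that an ideal generated by homogeneous elements is automatically graded; then well-definedness on the quotient is immediate.

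First I would observe that the two identities $d(wv) = d(w) + d(v)$ (for nonzero $wv$) and $d(w^*) = -d(w)$ stated just after Definition \ref{degreeMap} express exactly that the decomposition $\F = \bigoplus_{n \in \Z^k} \F_n$ is a $\Z^k$-grading on the free $*$-algebra $\F$, i.e.\ $\F_n \cdot \F_m \subseteq \F_{n+m}$ and $\F_n^* \subseteq \F_{-n}$. In particular, for every element $f \in \F$ there is a unique decomposition $f = \sum_n f_n$ with $f_n \in \F_n$ and only finitely many $f_n$ nonzero; I will write $f_n$ for the homogeneous components.

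Next I would show that $\I$ is a graded ideal, i.e.\ $\I = \bigoplus_{n \in \Z^k} (\I \cap \F_n)$. By hypothesis $\I$ is generated, as a two-sided self-adjoint ideal, by a subset $S$ of the fiber space $F = \bigcup_n \F_n$, so every generator is homogeneous (lies in some single $\F_n$); by the grading property, so is each adjoint $s^*$. An arbitrary element of $\I$ is thus a finite sum of products $u s v$ with $s \in S \cup S^*$ homogeneous of some degree $m$ and $u,v \in \F$. Decomposing $u = \sum_p u_p$ and $v = \sum_q v_q$ into homogeneous components, each $u_p s v_q$ lies in $\F_{p+m+q}$ and still belongs to $\I$. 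Consequently every element of $\I$ is a sum of homogeneous components, each of which already sits in $\I$, which is precisely the assertion that $\I$ is graded.

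Finally I would deduce the lemma. Suppose $w$ and $w'$ are nonzero words in $\F$ of degrees $n$ and $m$ respectively, representing the same nonzero element $w + \I = w' + \I$ in $X$. Then $w - w' \in \I$; since $w \in \F_n$ and $-w' \in \F_m$, the graded decomposition of $\I$ forces both homogeneous components to lie in $\I$ whenever $n \neq m$. In that case $w \in \I$, so $w + \I = 0$ in $X$, contradicting the assumption that the class is nonzero. Hence $n = m$, and the formula $d(w + \I) := d(w)$ is unambiguous. The only real work is the middle paragraph — recognising that the hypothesis ``generated by a subset of the fiber space'' is precisely what makes $\I$ homogeneous — and this is a standard bookkeeping with the grading rather than a genuine obstacle.
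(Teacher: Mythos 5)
Your proof is correct and follows essentially the same route as the paper: the paper likewise writes $v-w=\sum_i \alpha_i a_i x_i b_i$ with each summand homogeneous and compares fibers in the decomposition $\F=\bigoplus_n \F_n$, which is exactly your observation that an ideal generated by elements of the fiber space is graded. Your version merely isolates the graded-ideal property as an explicit intermediate step, which is a slightly cleaner organisation of the identical argument.
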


\begin{proof}
For two nonzero words $v+\I = w + \I$ in $X$, where $v$ and $w$ are words in $\F$,
we need to show that $d(v+\I):= d(v) = d(w) =: d(w + \I)$.
We have $v-w \in \I$.
Thus there are scalars
$\alpha_i \in \C$, words $a_i, b_i \in \F$, and elements $x_i \in \F_{j_i} \cap
\I$ such that
\begin{equation}   \label{vwfibers}
v-w = \sum_{i=1}^n \alpha_i a_i x_i b_i,
\end{equation}
where each summand $\alpha_i a_i x_i b_i$ is obviously in the fiber $\F_{j_i+d(a)+d(b)}$.
%
%
Since $v$ and $w$ are words, and thus elements of single fibers, say $v \in \F_{m_1}$ and $y \in \F_{m_2}$,
a compare of fibers in (\ref{vwfibers}) and using the direct sum representation (\ref{directsumrep}) shows that either $d(w) =d(v)$ (what we wanted to prove) or both $v$ and $w$ are elments in $\I$, which means that $v+\I$ is zero in $X$
(the case we exclude).
\end{proof}


\begin{definition}   \label{defFiberSpaceX}
{\rm
The {\em fiber space of $X$} is the image of the fiber space of $\F$ under the quotient map
$\F \longrightarrow X$.
}
\end{definition}

\begin{definition}   \label{definitionGaugaActions}
{\rm
Let $\sigma: \T^k \longrightarrow \aut(\F)$ be the {\em gauge action}
defined by
\begin{equation}   \label{gaugeactionformulas}
\sigma_\lambda (p) = p \qquad \mbox{and} \qquad
\sigma_\lambda(t) = \lambda^{d(t)} t
\end{equation}
for all $p \in \calp,t \in \calt$ and $\lambda \in \T^k$.
}
\end{definition}

The gauge action carries over to a canonically in the same way defined gauge action
$\sigma': \T^k \longrightarrow \aut (X)$. Indeed, since $X = \F/\langle Y \rangle$ is the quotient of $\F$ by a subset $Y$ of the fiber space,
and each element $r \in \F_m$ of the fiber space satisfies $\sigma_\lambda(r) = \lambda^{m} r$,
one has $\sigma_\lambda(\langle Y \rangle) \subseteq \langle Y \rangle$.
Hence $\sigma_\lambda$ induces $\sigma_\lambda' : \F/ \langle Y \rangle \longrightarrow \F/\langle Y \rangle$.
Note also that $\sigma_\lambda^{-1} = \sigma_{\lambda^{-1}}$ and similarly so for $\sigma_\lambda'$.
For simplicity we shall denote the gauge action on $X$ also by $\sigma$ if there is no danger of confusion.

\begin{definition}  \label{defGaugeActionOnX}
{\rm
Let
$\sigma: \T^k \longrightarrow \aut (X)$ denote the gauge action on $X$
determined by the formulas (\ref{gaugeactionformulas}).
}
\end{definition}

\begin{lemma}   \label{lemmaGaugeActions}
$X$ is the $*$-algebraic quotient of $\F$ by a subset of the fiber space if and only if
there is a gauge actions on $X$ as defined in Definition
\ref{defGaugeActionOnX}.
\end{lemma}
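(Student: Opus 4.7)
The ``only if'' direction is essentially already contained in the paragraph following Definition \ref{definitionGaugaActions}. If $X = \F/\langle Y\rangle$ with $Y$ a subset of the fiber space $F = \bigcup_{n\in\Z^k}\F_n$, then each $y \in Y$ lies in a single fiber $\F_{n_y}$, so $\sigma_\lambda(y) = \lambda^{n_y} y \in \langle Y\rangle$. Hence $\sigma_\lambda(\langle Y\rangle)\subseteq \langle Y\rangle$ and the gauge action on $\F$ descends to $X$ with exactly the formulas (\ref{gaugeactionformulas}).

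For the ``if'' direction, write $X = \F/\I$ and suppose there is a gauge action $\sigma': \T^k \to \aut(X)$ satisfying (\ref{gaugeactionformulas}) on generators. I first argue that $\I$ is gauge-invariant. Let $\pi:\F\to X$ denote the quotient map. Both $\pi\circ \sigma_\lambda$ and $\sigma_\lambda' \circ \pi$ are $*$-algebra homomorphisms $\F \to X$ which agree on the generating letters $\calp \cup \calt$ (by the matching formulas in Definitions \ref{definitionGaugaActions} and \ref{defGaugeActionOnX}); by the universal property of the free $*$-algebra $\F$, they coincide on all of $\F$. Consequently $\pi(\sigma_\lambda(\I)) = \sigma_\lambda'(\pi(\I)) = 0$, so $\sigma_\lambda(\I)\subseteq \I$ for every $\lambda\in \T^k$.

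The core of the proof is then the standard fact that a gauge-invariant ideal is spanned by its homogeneous pieces. Fix $x \in \I$ and use (\ref{directsumrep}) to decompose
\[
x = x_{n_1} + x_{n_2} + \cdots + x_{n_m}, \qquad x_{n_j}\in \F_{n_j},
\]
with the $n_j\in \Z^k$ distinct. For each $\lambda\in \T^k$ we have $\sigma_\lambda(x) = \sum_{j=1}^m \lambda^{n_j} x_{n_j} \in \I$. I now want to choose $\lambda_1,\ldots,\lambda_m \in \T^k$ so that the matrix $(\lambda_i^{n_j})_{1\le i,j\le m}$ is invertible; inverting this linear system expresses each $x_{n_j}$ as a $\C$-linear combination of $\sigma_{\lambda_1}(x),\ldots,\sigma_{\lambda_m}(x)$, proving $x_{n_j}\in \I$. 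Thus $\I$ is linearly spanned by $\I\cap F$, which exhibits $\I$ as generated (even as a vector subspace) by a subset of the fiber space, as required.

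The only step that needs some care is the existence of such $\lambda_1,\ldots,\lambda_m$, which is the main, and essentially only, obstacle when $k$ is allowed to be an infinite index set. However, the finitely many exponents $n_1,\ldots,n_m$ together have finite total support, say contained in a finite subset $k_0\subseteq k$; restricting $\lambda$ to the subtorus $\T^{k_0}\subseteq \T^k$ (set $\lambda_i \equiv 1$ outside $k_0$) reduces the problem to the classical finite-dimensional case. There one can, for instance, pick $\lambda_i = (z_i, z_i^{r_2}, \ldots, z_i^{r_{|k_0|}})$ for suitable integers $r_l$ making the character $n \mapsto \lambda^n$ separate the finitely many $n_j$, so the resulting Vandermonde-type matrix is non-singular. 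This completes the argument.
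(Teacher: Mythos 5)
Your proof is correct and takes essentially the same route as the paper's: decompose $x\in\I$ into its fiber components via (\ref{directsumrep}) and evaluate the gauge action at varying $\lambda\in\T^k$ to conclude that each homogeneous component lies in $\I$, so that $\I$ is spanned by $\I\cap F$. You merely make explicit two steps the paper leaves implicit, namely that $\pi\circ\sigma_\lambda=\sigma'_\lambda\circ\pi$ by the universal property of $\F$ (hence $\sigma_\lambda(\I)\subseteq\I$), and the Vandermonde/character-separation argument showing that $\sum_j\lambda^{n_j}x_{n_j}\in\I$ for all $\lambda$ forces each $x_{n_j}\in\I$, including the reduction to a finite subtorus when $k$ is infinite.
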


\begin{proof}
One direction we have proved.
For the reverse direction assume that $X$ has a gauge action.
Write $X = \F/\I$ canonically for a two-sided self-adjoint ideal
$\I$ in $\F$.
Let $x$ be an arbitrary element of $\I$. We may write $x= \sum x_n$
for $x_n$ in the fiber $\F_n$ for all $n \in \Z^k$.
We have, in $X$,
$$0 = \sigma_\lambda (x +\I)=\sum_{n \in \Z^k} \lambda^{n} x_n + \I$$
for all $\lambda \in \T^k$. Thus $x_n \in \I$ for all $n
\in\Z^k$, and so $x_n \in \I \cap \F_n$.
Since $x$ was arbitrary, $\I$ is the linear span of $\bigcup_{n \in \Z^k} (\I \cap \F_n)$.
So $X$ is the quotient of a subset of the fiber space.
\end{proof}

Since it is somewhat shorter, we shall occasionally write $|x|$ for the
degree $d(x)$.

\section{Semigraph algebras}

We shall use the following notions when we speak about algebras.
A {\em $*$-algebra} means an algebra
over $\C$ endowed with an involution. An element $s$ in a
$*$-algebra is called a {\em partial isometry} if $s s^* s= s$,
and a {\em projection} $p$ is an element with $p = p^2 = p^*$.
If $I$ is a subset of a $*$-algebra then $\langle I \rangle$ denotes the self-adjoint two-sided
ideal generated by $I$ in this $*$-algebra.

\begin{definition}[Semigraph algebra]    \label{DefSemigraphAlgebra}
{\rm A {\em $k$-semigraph algebra} $X$ is a $*$-algebra which is
generated by disjoint subsets $\calp$ and $\calt$ of $X$, where

\begin{itemize}
\item[(i)]
$\calp$ is a set of commuting projections closed under taking
multiplications,

\item[(ii)]
$\calt$ is a set of nonzero partial isometries closed under nonzero
products,

\item[(iii)]
%
$\calt$ is a non-unital finitely aligned $k$-semigraph,

\item[(iv)]
for all $x \in \calt$ and all $p \in \calp$ there is a $q \in \calp$
such that $p x = x q$,

\item[(v)]
for all $x,y \in \calt$ there exist $q_{x,y,\alpha,\beta} \in \calp$
such that
\begin{equation}  \label{defExpansion}
x^* y = \sum_{(\alpha,\beta) \in \calt_1^{(\min)}(x,y)} \alpha q_{x,y,\alpha,\beta} \beta^*, \mbox{ and}
\end{equation}

\item[(vi)]
$X$ is canoncially isomorphic to the quotient of $\F$ by a subset of the fiber space
(Definition \ref{defFiberSPace}).
\end{itemize}
}
\end{definition}

We denote the unitization of the non-unital semigraph $\calt$ by $\calt_1$ ($\calt_1$ appears in identity (\ref{defExpansion})).
Note that
$\calt_1 := \calt \sqcup \{1\}$ is a finitely aligned $k$-semigraph by Definition \ref{DefSemigraphAlgebra} (iii).
It is understood in (\ref{defExpansion}) that the unit $1$ of $\calt_1$
is also a unit for $X$.
So we may assume that $1$ is the unit
of the unitization of $X$, which is $\tilde X = X \oplus \C 1$.
The only reason why we use non-unital $k$-semigraphs instead of $k$-semigraphs is that we wanted to avoid forcing a semigraph algebra to be unital.


Note that the product of two elements $s$ and $t$ of $\calt$ stays in $\calt$ (so is composable in $\calt$)
if and only if $s t \neq 0$. This is a somewhat subtle implication of Definition \ref{DefSemigraphAlgebra} (ii).
In general, such a construction is a typical example of a semimultiplicative set; for instance if $R$ is a ring, then $R\backslash \{0\}$
is a semimultiplicative set under multiplication.

We shall occasionally write line (\ref{defExpansion}) as
\begin{equation} \label{formula2}
x^* y = \sum_{x  \alpha = y \beta} \alpha q_{x,y,\alpha,\beta} \beta^*.
\end{equation}
Note that in the last identity $d(x \alpha)= d(x) + d(\alpha) = d(y \beta) = d(y) + d(\beta)$,
so that in formula (\ref{formula2}) we have
$$d(x^* y) = -d(x) + d(y) = d(\alpha) -d(\beta) = d(\alpha q_{x,y,\alpha,\beta} \beta^*).$$
This shows that formula (\ref{formula2}) is a relation in the fiber space.

The precise meaning of point (vi) of Definition \ref{DefSemigraphAlgebra}
is that the kernel of the canoncial epimorphism $\F \longrightarrow X$ is an ideal which is generated by a certain subset of the fiber space.
In other words, $X$ can be regarded as the free $*$-algebra $\F$ generated by $\calt$ and $\calp$ divided by a family of equations
$x_i = 0$, where $x_i$ is a linear combination of words $w$ with common degree $d(w)$ (depending only on $i$).
Equivalently, there is a gauge action on $X$ (Lemma \ref{lemmaGaugeActions}).

\begin{lemma}
There is a degree map on the set of nonzero words of $X$ which extends the degree map on $\calt$
(see Definition \ref{degreeMap}).
\end{lemma}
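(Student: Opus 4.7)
The plan is to deduce this as a nearly direct corollary of Lemma \ref{lemmadegreemap}, since the hard work has already been carried out there. I would begin by observing that by clause (vi) of Definition \ref{DefSemigraphAlgebra}, $X$ is (canonically isomorphic to) a quotient $\F/\I$ where $\I$ is generated by a subset of the fiber space of $\F$. This is exactly the hypothesis of Lemma \ref{lemmadegreemap}, so applying that lemma yields a well-defined degree map on the nonzero words of $X$, given by the formula $d(w+\I) = d(w)$.

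Next I would verify that this degree map extends the semigraph degree on $\calt$. An element $t \in \calt$ is by definition a one-letter word in $\F$, and on such a word $d$ agrees with the semigraph-degree by construction (see Definition \ref{degreeMap}). Under the canonical epimorphism $\F \longrightarrow X$, the image of $t$ is a nonzero word in $X$ (the generators in $\calt$ are nonzero partial isometries by Definition \ref{DefSemigraphAlgebra}(ii)), so $d(t+\I) = d(t)$ coincides with the original semigraph degree of $t$. This gives the extension property.

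There is essentially no obstacle here, since well-definedness was the content of Lemma \ref{lemmadegreemap}; the only small point to check is that the ``word in $X$'' terminology is consistent with the application of that lemma, which it is by the convention set out immediately before Lemma \ref{lemmadegreemap}. Consequently the proof is just an invocation of Lemma \ref{lemmadegreemap} together with the observation that $\calt \subseteq \F$ is mapped injectively to nonzero one-letter words in $X$.
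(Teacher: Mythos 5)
Your proposal is correct and follows exactly the paper's own route: the paper's proof is the one-line observation that the statement is Lemma \ref{lemmadegreemap} combined with Definition \ref{DefSemigraphAlgebra}~(vi). Your additional check that elements of $\calt$ map to nonzero one-letter words whose induced degree agrees with the semigraph degree is a harmless (and slightly more careful) elaboration of the same argument.
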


\begin{proof}
This is Lemma \ref{lemmadegreemap} in combination with Definition \ref{DefSemigraphAlgebra} (vi).
\end{proof}






\begin{lemma}    \label{lemmaSemigrafElementary}
Let $X$ be a semigraph algebra.
Then

(i) $d(1) = 0$,
(ii) $d(t) > 0$ for all $t \in \calt$,
(iii) $t^* t \in \calp$ for all $t \in \calt$, and
(iv) $s^* t = \delta_{s}^t s^* s$ for all $s,t \in \calt$ with $d(s) = d(t)$.
%
%
%
\end{lemma}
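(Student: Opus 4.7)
The plan is to reduce all four assertions to relation~(\ref{defExpansion}) together with the unique factorisation property in the unitized semigraph $\calt_1$. The key preliminary observation, already recorded in the discussion following the definition of non-unital $k$-semigraph, is that in $\calt_1$ the unit $1$ is the unique element of degree $0$: the identity $1 = 1 \cdot 1$ forces $d(1) = 2 d(1) = 0$, and then the decomposition $t = 1 \cdot t$ together with the uniqueness clause of the factorisation property shows that no other element of $\calt_1$ can have degree $0$. This gives (i) and (ii) immediately, since $\calt = \calt_1 \setminus \{1\}$.

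For (iii), I apply relation (\ref{defExpansion}) with $x = y = t$. Any pair $(\alpha,\beta) \in \calt_1^{(\min)}(t,t)$ must satisfy $d(t\alpha) = d(t) \vee d(t) = d(t)$; using the identity $d(t\alpha) = d(t) + d(\alpha)$ (from the unique factorisation property in $\calt_1$), this forces $d(\alpha) = d(\beta) = 0$, and hence $\alpha = \beta = 1$ by the preliminary observation. Thus $\calt_1^{(\min)}(t,t) = \{(1,1)\}$ and the sum collapses to $t^* t = q_{t,t,1,1}$, which lies in $\calp$ by clause (v).

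For (iv), the same reasoning applied to $(s,t)$ with $d(s) = d(t)$ shows that any $(\alpha,\beta) \in \calt_1^{(\min)}(s,t)$ must have $\alpha = \beta = 1$; the defining condition $s \alpha = t \beta$ then reads $s = t$. Consequently $\calt_1^{(\min)}(s,t)$ is empty whenever $s \neq t$, so relation (\ref{defExpansion}) gives $s^* t = 0$ in that case, while for $s = t$ it returns the single term $q_{s,s,1,1} = s^* s$ from (iii). Combined, this is precisely $s^* t = \delta_s^t \, s^* s$.

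No part of this is really an obstacle; the only point requiring care is to keep track of the fact that (\ref{defExpansion}) lives in $\calt_1$ (not $\calt$), so that $\alpha = \beta = 1$ is a legitimate possibility and is the one selected whenever the degree constraint forces trivial extensions.
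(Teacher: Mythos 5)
Your proposal is correct and follows essentially the same route as the paper: (i) and (ii) are exactly the observations the paper makes in Section \ref{SectionSemigraph} about non-unital $k$-semigraphs, and (iii)--(iv) are obtained, as in the paper, by noting that for $d(s)=d(t)$ the degree constraint in $\calt_1^{(\min)}(s,t)$ forces $\alpha=\beta=1$, so that (\ref{defExpansion}) collapses to $q_{s,s,1,1}\in\calp$ when $s=t$ and to the empty sum when $s\neq t$. The only difference is that you spell out the degree computation that the paper leaves implicit.
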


\begin{proof}
(i) and (ii) were proved in Section \ref{SectionSemigraph}.
%

(iii)-(iv) If $d(s) = d(t)$ then $s^* s = 1 q_{s,t,1,1} 1 \in \calp$ and $s^* t = 0$ for $s \neq t$ by (\ref{defExpansion}).
\end{proof}

\begin{definition}
{\rm
The enveloping $C^*$-algebra $C^*(X)$ of $X$ is called the {\em semigraph $C^*$-algebra associated to $X$}.
}
\end{definition}

\begin{lemma}
Points (ii), (iv) and (v) of Definition \ref{DefSemigraphAlgebra}
also hold for $x,y \in \calt_1$.
\end{lemma}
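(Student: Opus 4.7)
Since (ii), (iv), and (v) of Definition \ref{DefSemigraphAlgebra} are already assumed for $\calt$, the only thing to check is what happens when at least one of the arguments is the adjoined unit $1$ of $\calt_1$. The rest of the argument is bookkeeping with $d(1)=0$ and the fact, recorded right after the definition of non-unital semigraph, that $1$ is the \emph{only} element of $\calt_1$ of degree $0$.

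For (ii): $1$ is a nonzero projection, hence a nonzero partial isometry, and $1 \cdot t = t \cdot 1 = t \in \calt_1$ for every $t \in \calt_1$, so $\calt_1$ is closed under nonzero products. For (iv): given any $p \in \calp$, the choice $q := p$ satisfies $p \cdot 1 = 1 \cdot p$, so the identity $px = xq$ holds with $x = 1$.

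The substantive case is (v). I would first compute the minimal common extensions involving $1$. Using $d(1)=0$, uniqueness of the degree-$0$ element, and the unique factorisation property, one gets
$$
\calt_1^{(\min)}(1,y) = \{(y,1)\}, \qquad \calt_1^{(\min)}(x,1) = \{(1,x)\}, \qquad \calt_1^{(\min)}(1,1) = \{(1,1)\}
$$
for $x,y \in \calt$. The identities to verify then collapse to $y = y \cdot q$, $x^* = q \cdot x^*$, and $1 = q$ respectively. The first two are handled by taking $q := y^* y$ and $q := x^* x$, both of which lie in $\calp$ by Lemma \ref{lemmaSemigrafElementary}(iii), together with the partial-isometry identities $y y^* y = y$ and $x^* x x^* = x^*$.

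The only mild subtlety, and essentially the sole obstacle, is the last subcase $x=y=1$, where one needs $q = 1$. Strictly $1$ need not be an element of $\calp$, but since $1$ is a unit projection in (the unitization of) $X$ commuting with everything, it can be regarded as an honorary element of $\calp$ without altering any defining relation; under this standard convention (already tacit in the remark following Definition \ref{DefSemigraphAlgebra} that $1$ is a unit for $X$), (v) holds literally for $x,y \in \calt_1$. All the remaining verifications are a one-line partial-isometry calculation.
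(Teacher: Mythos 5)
Your proof is correct and follows essentially the same route as the paper: reduce to the cases involving the adjoined unit, compute $\calt_1^{(\min)}(1,y)=\{(y,1)\}$, and take $q=y^*y\in\calp$ via Lemma \ref{lemmaSemigrafElementary}(iii). You are in fact slightly more thorough than the paper, which only treats $x=1$ with $y\in\calt$ and silently skips the $x=y=1$ edge case that you correctly flag as requiring the convention that $1$ acts as an honorary element of $\calp$.
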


\begin{proof}
(iv) Of course, $p 1 = 1 p$. (v) Say $x=1$.
Then $(y,1)$ is the only element in $\calt_1^{\rm (min)}(1,y)$
and one has formula (\ref{defExpansion}), namely $1^* y =  y (y^* y) 1$ with $q_{1,y,y,1}=y^* y \in \calp$
(Lemma \ref{lemmaSemigrafElementary} (iii)).
\end{proof}

\begin{definition}
{\rm
We shall use the following notations:
\begin{eqnarray*}
\calt_1 \calp &=& \set{ s p \in X}{s \in \calt_1, p \in\calp}, \\
\calt_1 \calp \calt_1^*  &=& \set{ s p t^* \in X}{ s,t \in \calt_1, p \in\calp}.
\end{eqnarray*}
}
\end{definition}

\begin{definition}
{\rm
We shall use the following vocabulary for better readability of this paper:

We call an element of $\calt_1 \calp \calt_1^*$ a {\em standard word}
(of the semigraph algebra $X$).
We call an element of $\calt_1 \calp$ a {\em half-standard word}.
}
\end{definition}

So an element $w$ of a semigraph algebra is a standard word if it allows a representation $w = s p t^*$
for some $s,t \in \calt_1$ and $p \in \calp$. In particular, $p, s p$ and $p t^*$ are also standard words (since $1 \in \calt_1$).
A half-standard word is a standard word.

The first important observation we shall make is that
the word set of a semigraph algebra is an inverse semigroup.
%
Note in particular that this also means that the range and source projections of all words commute among each other (also between different words).
It is however not true that the standard words form an inverse semigroup. They do not form a stable set under multiplication.


\begin{lemma} \label{semigraphwordrep}
\begin{itemize}
\item[(a)]
The word set of $X$ is an inverse semigroup of partial isometries.

\item[(b)]
For each word $w$ there are half-standard words $a_i,b_i$ and $c_i$ such
that
\begin{equation}   \label{sumrepcommute}
w w^* = \sum_{i=1}^n a_i a_i^* \qquad \mbox{and} \qquad w = \sum_{j =1}^m b_j
c_j^*
\end{equation}
with $d(w)=d(b_j c_j^*)$ for all $1 \le j \le m$.
\end{itemize}
\end{lemma}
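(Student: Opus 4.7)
The plan is to prove part (b) first by induction on the length of the formal word $w$, and then deduce part (a) from (b) together with a direct verification that standard words are partial isometries.

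For the base case of (b), every generator already has the desired shape: $p = p\cdot p^*$ with $p \in \calp\subset \calt_1\calp$; $t = (t(t^*t))\cdot(t^*t)^*$ with both factors half-standard; and $t^* = (t^*t)\cdot(t(t^*t))^*$, using $t^*t\in\calp$ from Lemma \ref{lemmaSemigrafElementary}(iii). Similarly $aa^*$ for a single generator is a half-standard squared. For the inductive step, write $w = w'\cdot a$ where $w'=\sum_j b_j c_j^*$ by hypothesis with $c_j = t_j q_j \in \calt_1\calp$. I simplify $c_j^*\cdot a$ case by case: if $a\in\calp$, then $c_j^*a = (ac_j)^*$ and (iv) shows $ac_j$ is half-standard; if $a=t\in\calt$, relation (v) gives $t_j^*t = \sum_{(\alpha,\beta)\in\calt_1^{(\min)}(t_j,t)}\alpha\, q_{t_j,t,\alpha,\beta}\,\beta^*$ and pushing the ambient $q_j$ past $\alpha$ via (iv) keeps everything half-standard; if $a=t^*\in\calt^*$, use $c_j^*t^* = (tc_j)^*$ and observe $tc_j$ is half-standard whenever $tt_j\neq 0$. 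In every case $w$ is rewritten as $\sum b'_i c'^*_i$ with half-standard entries, and the degree identity $d(b'_i c'^*_i) = d(w)$ is checked from $d(\alpha)-d(\beta) = d(a)-d(t_j)$ inside each common extension.

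For the expression $ww^* = \sum a_i a_i^*$, I would compute
\[
ww^* = \sum_{j,k} b_j\,(c_j^*c_k)\,b_k^*
\]
and expand $c_j^*c_k = q_j t_j^* t_k q_k$ via (v) into $\sum_{(\alpha,\beta)\in\calt_1^{(\min)}(t_j,t_k)}\alpha\, Q_{jk\alpha\beta}\,\beta^*$, then absorb projections into $b_j$ and $b_k^*$ to obtain $\sum (s_j\alpha)\,R\,(s_k\beta)^*$. The constraint $t_j\alpha = t_k\beta$ forces $s_j\alpha$ and $s_k\beta$ to share the degree $d(w) + d(t_j)\vee d(t_k)$, so Lemma \ref{lemmaSemigrafElementary}(iv) annihilates the contributions with $s_j\alpha \neq s_k\beta$. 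What remains has the form $(s_j\alpha)\,R\,(s_j\alpha)^* = (s_j\alpha\cdot R)(s_j\alpha\cdot R)^*$ with $s_j\alpha\cdot R\in\calt_1\calp$, delivering the required $\sum a_ia_i^*$ decomposition.

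For (a), the word set is manifestly closed under products (concatenation) and adjoints (formal reversal). Each standard word $spt^*$ is a partial isometry by direct calculation using $s(s^*s)=s$ and commutativity in $\calp$: the product $(spt^*)(spt^*)^*(spt^*)$ collapses to $s[p(t^*t)(s^*s)p]t^* = spt^*$. For a general word $w$, applying the decomposition from (b) together with $ww^* = \sum a_ia_i^*$ and the partial isometry property of each $a_i$ yields $ww^*w = w$ after collapsing cross terms via Lemma \ref{lemmaSemigrafElementary}(iv). Commutativity of idempotents reduces to the shape $a_ia_i^* = sps^*$ with $p\in\calp$: two such conjugates commute because (v) expresses $s^*s'$ as a sum over common extensions whose coefficients lie in $\calp$, and $\calp$ is commutative.

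The principal obstacles will be the careful projection bookkeeping in the inductive step of (b), where each application of (iv) must preserve the degree identity; the collapse of the off-diagonal terms in $ww^*$, which depends crucially on Lemma \ref{lemmaSemigrafElementary}(iv) being applicable in the common degree fiber created by $t_j\alpha = t_k\beta$; and establishing that every formal word in $X$ is a partial isometry, which requires threading the decomposition from (b) through the evaluation of $ww^*w$ and verifying that the resulting cross terms do indeed collapse, rather than producing residual off-diagonal contributions.
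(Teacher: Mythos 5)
Your decomposition $w=\sum_j b_jc_j^*$ (the second identity) follows essentially the same inductive scheme as the paper and is fine, as is your observation that range projections $sps^*$ of half-standard words commute. The genuine gap is in your derivation of $ww^*=\sum_i a_ia_i^*$ from that decomposition. After expanding $c_j^*c_k$ via (v) and absorbing projections you arrive at terms $(s_j\alpha)\,R\,(s_k\beta)^*$, and you claim Lemma \ref{lemmaSemigrafElementary}(iv) kills those with $s_j\alpha\neq s_k\beta$ because the two elements have equal degree. But Lemma \ref{lemmaSemigrafElementary}(iv) annihilates products of the form $u^*v$ for distinct $u,v$ of equal degree; it says nothing about $u\,R\,v^*$, which for distinct equal-degree $u,v$ is in general a perfectly good nonzero standard word (e.g.\ $s_1s_2^*$ in $\calo_2$). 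The constraint you do have, $t_j\alpha=t_k\beta$, relates the $t$'s, not the $s$'s, so nothing forces $s_j\alpha=s_k\beta$; the off-diagonal terms are not of the form $aa^*$ and are not shown to vanish. The same unjustified ``collapse of cross terms'' is then reused in your sketch of $ww^*w=w$ for part (a).

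The paper avoids this by proving the first identity of (\ref{sumrepcommute}) by its own induction on word length, peeling letters off the outside of $ww^*$: in the only nontrivial case, $t^*ww^*t=\sum_i(t^*x_i)q_i(t^*x_i)^*$, expanding $t^*x_i$ and its adjoint produces an inner product $\beta^*q_i\beta'$ --- star on the left --- which genuinely collapses by (iv), and unique factorisation then forces $\alpha=\alpha'$, so every surviving term really is of the form $aa^*$. Part (a) is then obtained not from a direct computation of $ww^*w$ but from the general fact that a product of partial isometries whose source and range projections all commute is again a partial isometry, the commutativity being supplied by the first identity together with the commutativity of half-standard range projections. To repair your argument you should either reproduce that separate induction for $ww^*$ or give an actual proof that the cross terms $b_jc_j^*c_kb_k^*$ with $j\neq k$ vanish; as written, this step fails.
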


\begin{proof}
We are going to show that range projections of half-standard words commute.
Let $a$ and $b$ be half-standard words.
Then we may choose $x,y \in \calt_1$ and $p,q \in \calp$ such that $a = x p$ and $b = y q$.
We have
$$a a^* b b^* = x p (x^* y) q y^* = \sum_{(\alpha,\beta) \in \calt_1^{(\min)}(x,y) } x p \alpha
q_{x,y,\alpha,\beta} \beta^* q y^*$$
$$ = \sum x \alpha
p_\alpha q_{x,y,\alpha,\beta} q_\beta \beta^* y^*
=
\sum y \beta
p_\alpha q_{x,y,\alpha,\beta} q_\beta \beta^* y^*
$$
for certain
$p_\alpha ,q_\beta \in \calp$ such that $p \alpha = \alpha p_\alpha,
q \beta = \beta q_\beta$ by Definitions \ref{DefSemigraphAlgebra} (iv) and (v),
and since $x \alpha = y \beta$.
We see by the above identity that $a a^* b b^*$ is self-adjoint (since $\calp$ is a commuting set,
Definition \ref{DefSemigraphAlgebra} (i)). Thus
$a a^*$ and $b b^*$ commute.


%
%


We are going to show the first identity in (\ref{sumrepcommute}). We shall prove it by induction on the length of the word $w$. Assume that $w w^* = \sum_i a_i a_i^* = \sum_i x_i q_i x_i^*$ is proved (for $x_i \in \calt_1$ and $q_i \in \calp$ with $a_i = x_i q_i$).
If $t \in \calt$ then $t w w^* t^* = \sum_i t x_i q_i x_i^* t^*$
and we are done with this inductive step. If $p$ is in $\calp$ then
$$p w w^* p^* = \sum_i p x_i q_i x_i^* p^*= \sum_i  x_i p_i' q p_i' x_i^*$$
for the $p_i' \in \calp$ of Definition \ref{DefSemigraphAlgebra} (iv) satisfying $p x_i = x_i p_i'$,
and so we are also done with this inductive step.
If $t \in \calt$ then
$$t^* w w^* t = \sum_i t^* x_i q_i x_i^* t = \sum_i (t^* x_i) q_i (t^* x_i)^*$$
$$= \sum_i \sum_{t \alpha_i = x_i \beta_i} \alpha_i q_{t,x_i,\alpha_i,\beta_i} \beta_i^* q_i \sum_{t \alpha_i' = x_i \beta_i'}
\beta_i' q_{t,x_i,\alpha_i',\beta_i'} {\alpha_i'}^*$$
$$=
\sum_i \sum_{t \alpha_i = x_i \beta_i} \alpha q_{t,x_i,\alpha,\beta} \beta^*
\beta q_{i,\beta} q_{t,x_i,\alpha,\beta} {\alpha}^* $$
by Definition \ref{DefSemigraphAlgebra} (v) in the second line, and Definition \ref{DefSemigraphAlgebra} (iv) ($q_i \beta_i' = \beta' q_{i,\beta_i'}$) and Lemma \ref{lemmaSemigrafElementary} (iv) in the third line.
Note here also that $\alpha' = \alpha$ since necessarily $\beta' = \beta$, and one $\beta'$ allows only one solution $\alpha'$ in the equation $t \alpha' = x_i \beta_i$
by the unique factorisation property (Definition \ref{Defsemigraph}).
This proves the inductive step also in this case.

The proof of the second sum in (\ref{sumrepcommute}) is very similar.

By the first formula of (\ref{sumrepcommute}) and the fact that the $a_i a_i^*$'s commute (as we have proved at the beginning of this lemma) it is evident that $w w^*$ and $v v^*$ commute
for all words $w$ and $v$. Now $\calp$ and $\calt$ consist of partial isometries. Hence also their compositions are partial isometries since their source and range
projections commute. And so further we see by induction that words of any length are partial isometries. This proves point (a).
\end{proof}

\begin{corollary}   \label{corollarySpannedStandardWords}
A semigraph algebra is spanned by its standard words. 

The range projection of a word is a sum of range projections of half-standard words.
\end{corollary}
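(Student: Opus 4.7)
The plan is to read both assertions off Lemma \ref{semigraphwordrep}(b). By construction $X$ is linearly spanned by the images in $X$ of the formal words built from the alphabet $\calp \cup \calt \cup \calp^* \cup \calt^*$, so to prove the first assertion it suffices to exhibit every such word $w$ as a finite linear combination of standard words. The second identity in Lemma \ref{semigraphwordrep}(b) already supplies a decomposition $w = \sum_j b_j c_j^*$ with $b_j, c_j$ half-standard; what remains is to note that $b c^*$ is itself a standard word whenever $b$ and $c$ are half-standard.

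For that last point I would write $b = s p$ and $c = t q$ with $s, t \in \calt_1$ and $p, q \in \calp$, so that $b c^* = s (p q) t^*$, and then invoke the fact that $\calp$ is closed under multiplication (Definition \ref{DefSemigraphAlgebra}(i)) to conclude $p q \in \calp$ and hence $b c^* \in \calt_1 \calp \calt_1^*$. Combined with the paragraph above, this gives the first statement.

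For the second statement, Lemma \ref{semigraphwordrep}(a) guarantees that every word $w$ is a partial isometry, so the range projection of $w$ is precisely $w w^*$. The first identity in Lemma \ref{semigraphwordrep}(b) expresses this as $\sum_i a_i a_i^*$ with half-standard $a_i$, and each $a_i a_i^*$ is by definition the range projection of the half-standard word $a_i$, which is exactly the claim.

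Since the bulk of the work is already contained in Lemma \ref{semigraphwordrep}, there is no real obstacle here: the corollary is a repackaging of that lemma, and the only extra ingredient is the stability of $\calp$ under products, used to absorb the middle factor in $b c^*$ into $\calp$.
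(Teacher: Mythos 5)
Your proof is correct and follows the same route as the paper, which simply cites Lemma \ref{semigraphwordrep}(b); you additionally spell out the small implicit step that $b c^* = s(pq)t^*$ is a standard word because $\calp$ is closed under products. No issues.
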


\begin{proof}
This is a restatement of Lemma \ref{semigraphwordrep} (b).
\end{proof}

\begin{corollary}   \label{corollaryInvSemigroup}
A semigraph algebra is generated by the inverse semigroup of all its words.
\end{corollary}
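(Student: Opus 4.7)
The plan is to recognize Corollary \ref{corollaryInvSemigroup} as an immediate bookkeeping consequence of Lemma \ref{semigraphwordrep}(a), together with the definition of a semigraph algebra. By Definition \ref{DefSemigraphAlgebra}, $X$ is generated as a $*$-algebra by $\calp \cup \calt$. Every element of $\calp \cup \calt$ is, by construction, a word (of length one) in the quotient $X = \F/\I$, so $\calp \cup \calt$ is contained in the word set of $X$. Lemma \ref{semigraphwordrep}(a) tells us that this word set is already closed under multiplication and under the adjoint operation (which, on a partial isometry, is the semigroup inverse), i.e.\ it is an inverse semigroup of partial isometries inside $X$.

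From this, the $*$-subalgebra of $X$ generated by the inverse semigroup of all words contains the generating set $\calp \cup \calt$ of $X$ and so must exhaust $X$. An alternative way I would present the same argument, and which I might include as a second sentence, is to cite Corollary \ref{corollarySpannedStandardWords}: every standard word $s p t^{*}$ (with $s,t \in \calt_1$, $p \in \calp$) is itself a product of words in the sense of Section \ref{SectionDegreeWord}, hence a word in $X$; since $X$ is the linear span of its standard words, it is in particular contained in the linear span of the inverse semigroup of all words. There is no real obstacle here: the only thing one should be careful about is not to confuse the inverse semigroup of \emph{all} words (which is what the statement refers to) with the set of standard words, which by the remark preceding Lemma \ref{semigraphwordrep} is not stable under multiplication and therefore need not form a semigroup.
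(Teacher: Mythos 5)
Your proposal is correct and takes essentially the same route as the paper: the paper's proof likewise just observes that $X$ is the linear span of its words and that the word set is an inverse semigroup by Lemma~\ref{semigraphwordrep}. Your extra care in distinguishing the inverse semigroup of all words from the (non-multiplicatively-closed) set of standard words is sensible but not needed for the argument.
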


\begin{proof}
The semigraph algebra is the linear span of its words, and the word set is an inverse semigroup
by Lemma \ref{semigraphwordrep}.
\end{proof}

\begin{lemma} \label{lemmaproductprojection}
If $v_1, \ldots, v_n$ are half-standard words then there are half-standard words
$w_1 \ldots,
w_m$ such that
\begin{equation}  \label{Pproduct}
P_{v_1} P_{v_2} \ldots P_{v_n} = P_{w_1} + \ldots + P_{w_m},
\end{equation}
where the $P_{w_k}$'s are mutually
orthogonal and the $w_k$'s have common degree, i.e. $d(w_k)= d(w_i)$
for all $1 \le k ,i \le m$.
\end{lemma}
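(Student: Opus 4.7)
The plan is to prove (\ref{Pproduct}) by induction on $n$, with the case $n=1$ trivial (take $w_1=v_1$) and the case $n=2$ carrying all of the combinatorial content. For $n=2$ I would write $v_i = x_i p_i$ with $x_i \in \calt_1$ and $p_i \in \calp$, so that $P_{v_i} = x_i p_i x_i^*$, and then compute $P_{v_1}P_{v_2} = x_1 p_1 (x_1^* x_2) p_2 x_2^*$. Expanding $x_1^* x_2$ via Definition \ref{DefSemigraphAlgebra} (v) and using (iv) to push $p_1$ past each $\alpha$ and $p_2$ past each $\beta$ (exactly the maneuver already performed in the proof of Lemma \ref{semigraphwordrep}) yields
\begin{equation*}
P_{v_1}P_{v_2} \;=\; \sum_{(\alpha,\beta)\in\calt_1^{(\min)}(x_1,x_2)} (x_1\alpha)\, r_{\alpha,\beta}\, (x_1\alpha)^*,
\end{equation*}
where each $r_{\alpha,\beta} \in \calp$ (a product of three projections). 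Since $x_1\alpha = x_2\beta \in \calt_1$, each summand is the range projection of the half-standard word $w_{\alpha,\beta} := (x_1\alpha) r_{\alpha,\beta}$, and by the very definition of $\calt_1^{(\min)}(x_1,x_2)$ all the $w_{\alpha,\beta}$ share the common degree $d(x_1)\vee d(x_2)$.

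Orthogonality of the summands comes from combining the unique factorisation property with Lemma \ref{lemmaSemigrafElementary} (iv). If $(\alpha,\beta)\ne(\alpha',\beta')$, then $x_1\alpha\ne x_1\alpha'$: otherwise unique factorisation would force $\alpha=\alpha'$, and then $x_2\beta = x_1\alpha = x_1\alpha' = x_2\beta'$ would force $\beta=\beta'$ as well. Hence $x_1\alpha$ and $x_1\alpha'$ are distinct elements of $\calt_1$ of the same degree $d(x_1)\vee d(x_2)$, so $(x_1\alpha)^*(x_1\alpha')=0$ by Lemma \ref{lemmaSemigrafElementary} (iv), and the two range projections $P_{w_{\alpha,\beta}}$ and $P_{w_{\alpha',\beta'}}$ annihilate one another.

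For the inductive step I would assume $P_{v_1}\cdots P_{v_{n-1}} = \sum_i P_{u_i}$ with the $P_{u_i}$ mutually orthogonal and the $u_i$ of common degree $d_0$, then multiply by $P_{v_n}$ and apply the base case to each $P_{u_i}P_{v_n}$. This produces a double sum of range projections of half-standard words of common degree $d_0 \vee d(v_n)$, the same for every $i$ precisely because the $u_i$ already shared the common degree $d_0$. Within a fixed $i$ the new terms are orthogonal by the base case; between different $i$ and $i'$, each new term is dominated by $P_{u_i}$ (respectively $P_{u_{i'}}$), and since $P_{u_i}P_{u_{i'}} = 0$ orthogonality is preserved. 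The only delicate point — and the reason the common-degree condition must be built into the induction hypothesis rather than merely stated in the conclusion — is this preservation of a single common degree when concatenating the expansions coming from different $i$'s.
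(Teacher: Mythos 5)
Your proof is correct and follows essentially the same route as the paper: induction on $n$, with the inductive step expanding $P_{u_i}P_{v_n}$ via Definition \ref{DefSemigraphAlgebra} (v), pushing projections past $\alpha$ and $\beta$ via (iv), and recognizing the summands as range projections of half-standard words of common degree $d_0\vee d(v_n)$. The only cosmetic difference is that the paper secures cross-term orthogonality by carrying the mutual distinctness of the $\calt_1$-parts $t_k$ through the induction (so the $t_k\beta_k$ are distinct of equal degree), whereas you get it from domination by the mutually orthogonal $P_{u_i}$; both work.
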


\begin{proof}
By induction hypothesis
assume that (\ref{Pproduct}) is already proved.
We may write $v_{n+1} = s p$ and $w_k = t_k q_k$ for some $s, t_k \in \calt_1$ and $p,p_k \in \calp$.
Assume that $d(t_k) = d(t_i)$ for all $k$ and $i$
and that the $t_k$'s are mutually distinct.
By Definition \ref{DefSemigraphAlgebra} (iv) and (v) we have
%
\begin{eqnarray*}
P_{v_{n+1}} \sum_{k=1}^m P_{w_k}
&=& \sum_{k=1}^m (s p s^*) (t_k q_k t_k^*)\\
&=& \sum_{k=1}^n s p \sum_{s \alpha_k = t_k \beta_k} \alpha_k q_{k,s,t_k,\alpha_k,\beta_k} \beta_k^* q_k t_k^*\\
&=&  \sum_{k=1}^n \sum_{s \alpha_k = t_k \beta_k} s \alpha_k p_{k,\alpha_k} q_{k,s,t_k,\alpha_k,\beta_k} q'_{k,\beta_k} \beta_k^* t_k^*\\
&=&  \sum_{k=1}^n \sum_{s \alpha_k = t_k \beta_k} P_{t_k \beta_k p_{k,\alpha_k} q_{k,s,t_k,\alpha_k,\beta_k} q'_{k,\beta_k}},
\end{eqnarray*}
where $p_{k,\alpha_k} ,q'_{k,\beta_k}  \in \calp$ such that $p \alpha_k = \alpha_k p_{k,\alpha_k}$ and $q_k \beta_k = \beta_k q'_{k,\beta_k}$.
This proves the claim since the $t_k \beta_k$'s are mutually distinct.
\end{proof}

\begin{lemma}  \label{lemmaSourceProjHSword}
The source projection of a half-standard word is in $\calp$.
\end{lemma}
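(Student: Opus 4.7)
The plan is to take a generic half-standard word $w \in \calt_1\calp$, write it as $w = sp$ with $s \in \calt_1$ and $p \in \calp$, and compute the source projection $w^*w$ directly. First, I would observe that $w$ is a partial isometry (so that $w^*w$ is indeed the source projection); this follows from Lemma \ref{semigraphwordrep}(a), since $w$ is a word in $X$. Then
\[
w^*w \;=\; p^{*}s^{*}sp \;=\; p\,(s^{*}s)\,p,
\]
using that $p$ is a projection.

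Next I would split into two cases based on whether $s$ is the adjoined unit of $\calt_1$ or a genuine element of $\calt$. If $s = 1$, then $s^{*}s = 1$ and hence $w^{*}w = p^{2} = p \in \calp$, which is the desired conclusion. If instead $s \in \calt$, Lemma \ref{lemmaSemigrafElementary}(iii) gives $s^{*}s \in \calp$. Since $\calp$ is a commuting set of projections closed under multiplication by Definition \ref{DefSemigraphAlgebra}(i), the product $p(s^{*}s)p$ of three elements of $\calp$ again lies in $\calp$.

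There is essentially no obstacle in this argument: the statement is almost tautological once one uses the representation $w = sp$ together with Lemma \ref{lemmaSemigrafElementary}(iii) and the closure properties of $\calp$. The only mild subtlety is keeping track of the unit case $s = 1$, which is handled separately but trivially since then $w$ is itself already in $\calp$.
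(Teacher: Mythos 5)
Your proof is correct and essentially identical to the paper's: the paper likewise writes the half-standard word as $\alpha p$ and observes that $p\alpha^*\alpha p \in \calp$ using Lemma \ref{lemmaSemigrafElementary} and the closure properties in Definition \ref{DefSemigraphAlgebra}(i). Your explicit treatment of the unit case $s=1$ is a minor additional care the paper leaves implicit.
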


\begin{proof}
Let $\alpha p$ ($\alpha \in \calt_1, p \in \calp$) be a half-standard word.
Then $p \alpha^* \alpha p \in \calp$ by Lemma \ref{lemmaSemigrafElementary}
(iv) and Definition \ref{DefSemigraphAlgebra} (i).
\end{proof}

The idempotent elements of the inverse semigroup of words in a semigraph algebra are
the range projections of words.
By Lemma \ref{semigraphwordrep} the range projection, and thus also the source projection, of any word is the orthogonal sum of range projections of half-standard words.
It is thus natural to consider common refinements of such range projections in the further analysis,
and this is what the next definitions are all about.
These common refinements will be called standard projections.
They will be useful in the further analysis of semigraph algebras.

\begin{definition}
{\rm
For an element $x$ of a $*$-algebra $X$ we put
$P_x = x x^*$ and $Q_x = x^* x$.
For a subset $Z$ of a $*$-algebra $X$ we set
\begin{eqnarray*}
\P(Z) &=& \set{P_{x} (1-P_{y_1}) \ldots (1-P_{y_m}) \in X}{ x,y_i \in Z,\,  m \ge 0}.
\end{eqnarray*}
}
\end{definition}

\begin{definition}
{\rm
For better readability we introduce the following vocabulary.

We call an element of $\P(\calt_1 \calp)$ a {\em standard projection}
(of the semigraph algebra $X$).

We call an element of $\P(\calp)$ an {\em elementary standard projection}.
}
\end{definition}

For instance, $x p x^*(1 - y q y^*)$ is a standard projection given $x,y \in \calt_1$ and $p,q \in \calp$.

\section{The core}
\label{SectionsSemigraphAlgebraCore}


\begin{definition}
{\rm
The {\em core} of a semigraph algebra $X$ is the $0$-fiber $X_0$, that is, the linear span of all words with degree zero.
}
\end{definition}

Since we have $d(v w) = d(v) + d(w)$ and $d(v^*) = -d(v)$ for words $v$ and $w$ in $X$, the core is even
a $*$-subalgebra of $X$.
The next proposition is the basic tool for the analysis of the core.

\begin{proposition} \label{centralAFproposition}
(i) Suppose $X$ is a $*$-algebra and $G=\{s_1,\ldots,s_n\}$ a finite, self-adjoint
subset of partial isometries of $X$ with commuting range projections.
Let $\{p_1,\ldots,p_N \}$ be the collection of all minimal projections
of the finite dimensional commutative subalgebra $Z$ of $X$ generated
by the range projections $\{P_{s_i}\}_{i=1}^n$ of the elements of $G$.

Assume that for all $1 \le i,j \le n$ there exist nonnegative reals
$\lambda_1,\ldots,\lambda_n \ge 0$ such that
\begin{equation}    \label{finiteM}
s_i s_j =
\sum_{k=1}^{n} \lambda_k s_{k}.
\end{equation}
Assume that for all $1 \le i,j \le
n$
\begin{equation}   \label{alsoInZ}
s_i P_{s_j} s_i^* \in Z.
\end{equation}

Then for all $1 \le x,y \le N$ and all $1 \le i \le n$ one has
\begin{equation}  \label{matrixUnits}
p_x s_i p_y
\neq 0 \qquad \Longrightarrow \quad p_x s_i p_y = p_x s_i = s_i p_y.
\end{equation}

(ii) Assume further that for all $1 \le x,y \le N$ and all $1 \le
i,j \le n$ one has
\begin{equation} \label{assumption1}
p_x s_i p_y \neq 0  \quad \mbox{and} \quad p_x s_j p_y\neq 0
\qquad \Longrightarrow \qquad  p_x s_i s_j p_x  \mbox{ is a projection}.
\end{equation}
Then the linear span $M$ of $G$ is a finite dimensional $C^*$-algebra with
generating canonical matrix units $(e_{x,y})_{1 \le x,y \le N}$,
where $e_{x,y} = p_x s_i p_y$ when there is some $1 \le i \le n$
such that $p_x s_i p_y \neq 0$, and $e_{x,y}= 0$ otherwise. Actually, $e_{x
y}$ does not depend on $i$. If $e_{x,x} \neq 0$ then $e_{x,x} =
p_x$.
\end{proposition}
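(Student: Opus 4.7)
My plan is to extract from hypothesis (\ref{alsoInZ}) and the self-adjointness of $G$ that the conjugation $\phi_i(z) := s_i z s_i^*$ is a $*$-endomorphism of $Z$, and then compress by minimal projections. Self-adjointness gives $s_i^* = s_{i'}$ for some $i'$, so $Q_{s_i} = s_i^* s_i = P_{s_{i'}} \in Z$; together with (\ref{alsoInZ}) this makes $\phi_i$ map $Z$ into $Z$, and multiplicativity comes from
\[
\phi_i(z_1)\phi_i(z_2) = s_i z_1 s_i^* s_i z_2 s_i^* = s_i z_1 Q_{s_i} z_2 s_i^* = s_i Q_{s_i} z_1 z_2 s_i^* = \phi_i(z_1 z_2),
\]
using commutativity of $Z$ and $s_i Q_{s_i} = s_i$. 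The same holds for $\psi_i(z) := s_i^* z s_i$. Hence both $s_i p_y s_i^*$ and $s_i^* p_x s_i$ are projections in $Z$, and their compressions satisfy $p_x s_i p_y s_i^* p_x \in \{0, p_x\}$ and $p_y s_i^* p_x s_i p_y \in \{0, p_y\}$.

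Assume $p_x s_i p_y \ne 0$. Then the two compressions above are precisely $(p_x s_i p_y)(p_x s_i p_y)^*$ and $(p_x s_i p_y)^*(p_x s_i p_y)$, so they equal $p_x$ and $p_y$. Furthermore, $p_x s_i \ne 0$ together with $P_{s_i} \in Z$ and minimality of $p_x$ forces $p_x \le P_{s_i}$, hence $p_x s_i s_i^* p_x = p_x$. Setting $A = p_x s_i$ and $B = p_x s_i p_y$, each of $AA^*, AB^*, BA^*, BB^*$ collapses to $p_x$, so $(A-B)(A-B)^* = p_x - p_x - p_x + p_x = 0$, forcing $A = B$. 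A symmetric argument with $\psi_i$ on the source side yields $s_i p_y = p_x s_i p_y$, completing part (i).

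For part (ii), well-definedness of $e_{xy}$ is the crux. If $p_x s_i p_y$ and $p_x s_j p_y$ are both nonzero, part (i) makes each a partial isometry with source $p_y$ and range $p_x$; I use hypothesis (\ref{assumption1}) together with the expansion $s_i s_j = \sum_k \lambda_k s_k$, the nonnegativity of the $\lambda_k$, and the identities of part (i) to pin down $p_x s_i p_y = p_x s_j p_y$. The relation $e_{xy}^* = e_{yx}$ is immediate from $s_i^* \in G$, and $e_{xy} e_{y'z} = 0$ for $y \ne y'$ because $p_y p_{y'} = 0$. For $y = y'$, part (i) collapses the product to $e_{xy} e_{yz} = p_x s_i s_j p_z = \sum_k \lambda_k p_x s_k p_z = \mu\, e_{xz}$ with $\mu \ge 0$; comparing $(e_{xy} e_{yz})(e_{xy} e_{yz})^* = p_x$ with $\mu^2 p_x$ forces $\mu = 1$.

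For the identity $e_{xx} = p_x$: the expansion $P_{s_i} = s_i s_i^* = \sum_k \lambda_k s_k$ places $Z \subseteq M = \lin(G)$, so $p_x \in p_x M p_x = \C\, e_{xx}$ and $e_{xx} = c\, p_x$ for some scalar. The projection assumption on $e_{xx}^2$, combined with $e_{xx}^2 = \sum_k \lambda_k p_x s_k p_x = \mu' e_{xx}$ with $\mu' \ge 0$, pins $\mu' = 1$ (the case $\mu' = 0$ contradicts the partial-isometry relation for $e_{xx}$), so $e_{xx}$ is a projection equal to $e_{xx} e_{xx}^* = p_x$. Then $M$ is spanned by the matrix units $e_{xy}$ and hence is a finite-dimensional $C^*$-algebra. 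The main obstacle I expect is the well-definedness of $e_{xy}$: carefully threading (\ref{assumption1}), the nonnegativity of the structure constants, and the partial-isometry identities from part (i) to rule out two genuinely distinct partial isometries $p_y \to p_x$ sitting in $\lin(G)$. A secondary implicit step, ``$uu^* = 0 \Rightarrow u = 0$,'' is tacitly used throughout and is justified via the ambient $C^*$-algebra $C^*(X)$.
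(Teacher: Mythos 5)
Your part (i) is essentially the paper's argument (conjugation by $s_i$ maps $Z$ into $Z$, and minimality of $p_x$ forces it to absorb the projection $s_i p_y s_i^*\in Z$), and it is correct; one small repair: the step ``$(A-B)(A-B)^*=0\Rightarrow A=B$'' should be justified by noting that $A-B=p_xs_i(1-p_y)$ is itself a partial isometry (a product of partial isometries whose source and range projections commute), so that $CC^*=0$ gives $C=CC^*C=0$ purely algebraically. Appealing to the ambient $C^*$-algebra $C^*(X)$ for ``$uu^*=0\Rightarrow u=0$'' is both unnecessary and circular in spirit: this proposition is precisely the tool used later to prove that the universal $C^*$-representation is faithful on the core, and for a bare $*$-algebra the canonical map into $C^*(X)$ need not be injective.

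The genuine gap is in part (ii): the independence of $e_{xy}=p_xs_ip_y$ from $i$, which you correctly identify as the crux but explicitly leave as an expected ``obstacle,'' offering only a hint of a strategy via the expansion $s_is_j=\sum_k\lambda_ks_k$ and the nonnegativity of the $\lambda_k$. That is not where the argument lives; the nonnegativity of the structure constants is needed only later, to fix the scalar in $e_{xy}e_{yz}=\lambda e_{xz}$ to be $1$ rather than an arbitrary unimodular phase. Well-definedness is instead a short, direct application of hypothesis (\ref{assumption1}): if $u=p_xs_ip_y$ and $v=p_xs_jp_y$ are both nonzero, then by (\ref{matrixUnits}) one has $u=p_xs_i$ and $v=p_xs_j$, hence $uv^*=p_xs_is_j^*p_x$, which is a projection by (\ref{assumption1}) since $s_j^*\in G$ by self-adjointness of $G$. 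On the other hand $u$ and $v$ are partial isometries with common source projection $p_y$ and range projection $p_x$, so $(uv^*)(uv^*)^*=up_yu^*=p_x$, and a projection that is a partial isometry with range projection $p_x$ must equal $p_x$; therefore $u=up_y=uv^*v=p_xv=v$. Without this step the matrix units are not defined and the rest of part (ii) does not get off the ground.
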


Note that by formula (\ref{finiteM}) and the fact that
$G$ is self-adjoint and finite, $M$ of the last proposition is surely a finite dimensional $*$-algebra.
The point is that $M$ is even a $C^*$-algebra together
with the relatively explicit computation of the matrix units.
Note also that the minimal projections $p_x$ are just the common refinements of the projections $P_{s_i}$.

\begin{proof}
Since $G$ is self-adjoint (that is, $G^*
\subseteq G$) $Z$ contains also the source projection of every
element of $G$. We have $s_i Z s_i^* \subseteq Z$ by (\ref{alsoInZ}) for every $s_i \in G$.
This also implies $s_i^* Z s_i \subseteq Z$ as $s_i^* \in G$
too. In particular, $s_i p_x s_i^* \in Z$ for all $1 \le x \le N$ and $1 \le i \le n$.
Thus we have
$$p_x s_i p_y p_y^* s_i^* p_x^* \in p_x s_i Z s_i^* p_x \subseteq p_x Z p_x \subseteq Z$$
for all $1 \le x \le N$ and all $1 \le i \le n$.
Since $p_x$ and $s_i$ are partial isometries with commuting source projection $p_x$ and range projection
$s_i s_i^*$, $p_x s_i$ is also a partial isometrie. By such considerations we check
that
$$s_i, p_x s_i, s_i s_j, p_x s_i p_y$$
are partial
isometries with source and range projections living in the
commutative algebra $Z$.
The partial isometry
$$e_{xy}^i := p_x s_i p_y$$
is
either zero or has source projection $p_y$ and range projection
$p_x$ by minimality of the $p_z$'s. Hence, the composition $e_{x
y}^i e_{x' y'}^{i'}$ is a partial isometry again for all $1 \le x,y,x',y' \le N$ and all $1 \le i, i' \le n$.
If $e_{x y}^i \neq 0$ then, since $Z$ is a commutative algebra,
\begin{equation}   \label{represei}
e_{x y}^i = p_x s_i p_y p_y s_i^* s_i
p_y = p_x (s_i p_y s_i^*) s_i = p_x s_i,
\end{equation}
as the minimal projection $p_x$ absorbs the projection $s_i p_y
s_i^* \in Z$.
This proves claim (\ref{matrixUnits}).

Hence, if $e_{x y}^i \neq
0$ and $e_{x y}^j \neq 0$, then
$$e_{x y}^i (e_{x y}^j)^* = p_x s_i
s_j^* p_x.$$
By assumption (\ref{assumption1}) (also recall that $s_j^* \in G$),
this is a projection. Since, as noted above, $e_{x y}^i$ and $e_{x y}^j$
have common source projection $p_y$ and range projection $p_x$, this
is only possible when $e_{x y }^i = e_{x y}^j$. This proves that
$e_{x y} := e_{x y }^i$,
if nonzero, does not depend on $i$.

$M$ is a finite dimensional $*$-algebra by assumption (\ref{finiteM}). Since $\sum_{i=1}^N
p_i$ is a unit of $M$, the collection of all $e_{xy}$'s span $M$.
We have to show that the linear map $\varphi:M \rightarrow M_N(\C)$
determined by $\varphi(e_{ij})={ \hat e}_{ij}$ for $e_{ij} \neq 0$,
where ${\hat e}_{ij}$ denote the canonical matrix units of
$M_N(\C)$, is a $*$-homomorphism.  It will then automatically follow that $\varphi$ is
injective. That the nonzero $e_{x y}$'s are linearly independent,
follows from a standard proof exploiting the above mentioned fact
that the source and range projections of $e_{x y}$ are $p_y$ and
$p_x$, respectively. Suppose that $e_{xy}$ and $e_{yz}$ are nonzero.
Then
$$0 \neq e_{xy}^*=p_y s_i^* p_x = e_{y x}$$
by the above proved
uniqueness of $e_{y x}$. Thus $\varphi(e_{xy}^*)=\varphi(e_{xy})^*$.
Now we have
$$e_{xy} e_{yz} = p_x s_i s_j p_z = \sum_{k=1}^n \lambda_k
p_x s_k p_z= \lambda e_{xz}$$
for certain $\lambda_k \ge 0$ and $\lambda \ge 0$
by (\ref{represei}), by assumption (\ref{finiteM}), and by the above proved uniqueness of
the $e_{xz}^k$'s. Since both $e_{xy} e_{y z}$ and $e_{xz}$ are nonzero
partial isometries (as mentioned above), $|\lambda|^2 =1$, and so
$\lambda=1$ as $\lambda>0$.
Hence we have $e_{xy} e_{yz} = e_{xz}$, and so
$$\varphi(e_{xy}
e_{yz}) = \varphi(e_{xz}) = \hat e_{xz} =  \hat e_{xy} \hat e_{yz} =  \varphi(e_{xy}) \varphi( e_{yz}).$$
\end{proof}

For the remainder of this section assume that we are given a semigraph algebra $X$.

\begin{lemma} \label{finiteprodsetXe}
For every finite set $D$ of half-standard words there exists a finite
set $H$ of half-standard words containing $D$ such that
\begin{equation} \label{exprG}
G=\set{x y^*
\in X }{x,y \in H,\, d(x)=d(y)}
\end{equation}
satisfies all assumptions stated in
Proposition \ref{centralAFproposition}.
\end{lemma}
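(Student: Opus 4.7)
The plan is to assemble $H$ from two finite pieces: a finite set $F \subseteq \calt_1$ of $\calt_1$-parts furnished by Lemma~\ref{faclosure}, and a finite set $\Pi \subseteq \calp$ of projections rich enough to absorb all coefficients that arise in the relevant one-step expansions. First, let $E \subseteq \calt_1$ be the (finite) set of $\calt_1$-parts of the half-standard words in $D$, and apply Lemma~\ref{faclosure} to obtain a finite $F \supseteq E$ satisfying the closure~(\ref{faclosurecondition}). Second, assemble $\Pi$ to contain: (a) the projection parts of elements of $D$; (b) the source projections $Q_\alpha$ for $\alpha \in F$; (c) the coefficients $q_{x,y,\alpha,\beta}$ from (\ref{defExpansion}) for $x,y \in F$ and $(\alpha,\beta) \in \calt_1^{(\min)}(x,y)$, finite by finite alignment; and (d) the closure of (a)--(c) under products in $\calp$ together with the endomorphisms $p \mapsto \gamma^* p \gamma$ (for $\gamma \in F$), which implement Definition~\ref{DefSemigraphAlgebra}(iv) via the solution $q = x^* p x$ of $px = xq$. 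Multiplicativity $\gamma^*(pq)\gamma = (\gamma^* p \gamma)(\gamma^* q \gamma)$---a consequence of commutativity of $\calp$ together with $\gamma Q_\gamma = \gamma$---confines this closure to a finite Boolean subalgebra of $\calp$. Define $H = \{\,\alpha p : \alpha \in F,\, p \in \Pi,\, \alpha p \neq 0\,\}$, a finite set containing $D$.

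Now verify the assumptions of Proposition~\ref{centralAFproposition} for $G = \{\,xy^* : x,y \in H,\ d(x)=d(y)\,\}$. The set $G$ is finite and self-adjoint since $(xy^*)^* = yx^*$, and its range projections commute by Lemma~\ref{semigraphwordrep}(a). For (\ref{finiteM}), write $x = \alpha_1 p_1$, $y = \beta_1 q_1$, $x' = \alpha_2 p_2$, $y' = \beta_2 q_2$ (so $\alpha_i, \beta_i \in F$ and $p_i, q_i \in \Pi$) and expand $s_is_j = (xy^*)(x'y'^*)$ by substituting $y^* x' = \sum_{(\gamma,\delta) \in \calt_1^{(\min)}(\beta_1,\alpha_2)} \gamma\, q_{\gamma,\delta}\, \delta^*$ from (\ref{defExpansion}), then commuting the flanking projections past $\gamma$ and $\delta$ using Definition~\ref{DefSemigraphAlgebra}(iv). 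Each summand reduces to $(\alpha_1 \gamma)\cdot r_{\gamma,\delta}\cdot (\beta_2\delta)^*$ with $\alpha_1\gamma,\ \beta_2\delta \in F$ (by Lemma~\ref{faclosure}) and $r_{\gamma,\delta} \in \Pi$ (by the construction of $\Pi$). The absorbing identity $(\beta_2\delta)Q_{\beta_2\delta} = \beta_2\delta$ rewrites the summand as $uv^*$ with $u = (\alpha_1\gamma) r_{\gamma,\delta} \in H$ and $v = \beta_2\delta \in H$ of common degree, hence in $G$. For~(\ref{alsoInZ}), the analogous expansion of $s_i P_{s_j} s_i^*$ yields a double sum over pairs in $\calt_1^{(\min)}(\beta_1,\alpha_2)$; Lemma~\ref{lemmaSemigrafElementary}(iv) forces $\delta = \delta'$, and unique factorisation (Definition~\ref{Defsemigraph}(2)) then forces $\gamma = \gamma'$, so the survivors are range projections $P_w$ of half-standard words $w \in H$, which lie in $Z$. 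Condition~(\ref{assumption1}) follows from (\ref{finiteM}) by the minimality of $p_x$ in $Z$: sandwiching by $p_x$ isolates at most one summand of the expansion, which is a projection by the matrix-unit structure.

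The main obstacle will be keeping $\Pi$ finite despite the closure under the projection-moves of~(iv). Iterating these moves corresponds to crossing compositions in $\calt_1$ that need not lie in $F$, and without care the set could grow unboundedly. The key to finiteness is that only a single application per flanking projection is invoked in the one-level expansion of a binary product in $G$, together with the multiplicativity of the moves on the commutative $\calp$; this confines $\Pi$ to a finite Boolean subalgebra generated by a pre-computable finite base of projections, which is the technical heart of the argument.
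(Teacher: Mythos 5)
Your overall architecture matches the paper's: a finite set $F\supseteq E$ of $\calt_1$-parts supplied by Lemma \ref{faclosure}, a finite set of projections absorbing the coefficients produced by (\ref{defExpansion}) and by Definition \ref{DefSemigraphAlgebra}(iv), and then the verification of (\ref{finiteM}), (\ref{alsoInZ}) and (\ref{assumption1}) essentially as in the paper. The gap is in the finiteness of your $\Pi$. You define $\Pi$ as the closure of a finite base under products and under the maps $\phi_\gamma\colon p\mapsto \gamma^* p\gamma$ ($\gamma\in F$), and assert that multiplicativity confines this to a finite Boolean subalgebra of $\calp$. That is not true in general: since $\phi_\gamma\circ\phi_\delta=\phi_{\delta\gamma}$, the full closure contains $\phi_{\gamma^m}(p)$ for every composable power $\gamma^m$, and these may be pairwise distinct (in ultragraph or Exel--Laca type examples $\calp$ is an infinite Boolean algebra and pulling back along a loop acts like a shift). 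Your fallback remark that ``only a single application per flanking projection is invoked in the one-level expansion'' does not repair this: condition (\ref{finiteM}) forces $gh$ to be again a sum of elements of $G$, and those elements must themselves be multipliable inside $G$, so the pushes accumulate; depth one is not closed, and unbounded depth is not finite.

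What actually bounds the iteration is that every application of a push is accompanied by a strict increase of the degree of the attached $\calt_1$-part, which by Lemma \ref{faclosure} stays inside the finite set $F$; hence the push-depth along any chain is at most $\max_{f\in F} d(f)$. The paper encodes this by grading the projection sets: $\Pi_n$ consists of finite products of base projections pushed past letters of total degree at most $n$, and $H=\set{\alpha q}{\alpha\in F,\ q\in\Pi_{|\alpha|}}$, so that in the product computation the new coefficient $q_1^{(x)}$ lands in $\Pi_{|\alpha_1|+|x|}=\Pi_{|\alpha_1 x|}$ with $\alpha_1 x\in F$, and everything stays finite because the degrees occurring in $F$ are bounded. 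You need this (or an equivalent degree-bounded closure) to make $H$ finite; with it, the rest of your verification goes through as in the paper. A smaller point: your justification of (\ref{assumption1}) ``by the matrix-unit structure'' is circular, since that structure is the conclusion of Proposition \ref{centralAFproposition}; the paper instead shows directly that the single summand surviving the sandwich, $p\,gh\,p$, equals $p$, because $|\alpha_1 x_0|=|\alpha_4 y_0|$ forces $\alpha_1 x_0=\alpha_4 y_0$.
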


\begin{proof}
Let $D$ be a finite set of half-standard words. There are
$x_1,\ldots,x_n \in \calt_1$ and $l_1, \ldots, l_n \in \calp$ such
that $D=\{x_1 l_1,\ldots, x_n l_n\}$. By Lemma \ref{faclosure}
there exists a finite subset $F$ which contains $E=\{x_1,\ldots, x_n\}$
and satisfies the stability condition (\ref{faclosurecondition}). By Definition \ref{DefSemigraphAlgebra} (v),
for every $x,y \in F$  
we may choose $q_{x,y,\alpha,\beta} \in \calp$ such that
$$x^* y =
\sum_{(\alpha,\beta) \in \calt_1^{(\min)}(x,y)} \alpha
q_{x,y,\alpha,\beta} \beta^*.$$
Write $L$ for the finite collection
of these $q_{x,y,\alpha,\beta}$'s. Set
$\cala = \bigcup_{i \in k} \calt^{(e_i)}$.
Define the
following finite letter set $A$,
$$A= \set{a \in \cala}{ \exists \alpha,\beta \in \calt_1 \mbox{ such that } \alpha a
\beta \in F}.$$
In other words, $\cala$ is the collection of those letters which are part of a word in $F$.
Consequently, for every $x \in F$ there are $a_j \in A$
such that $x = a_1 \ldots a_i$. For every $q \in \calp$ and every $a
\in A$ choose a projection $Q(a,q)$ in $\calp$ such that $q a = a Q(a,q)$
according to Definition \ref{DefSemigraphAlgebra} (iv). Successively applying the last identity
we get
\begin{eqnarray}
q a_1 \ldots a_i &=& a_1 Q(a_1,q) a_2 \ldots a_i  \nonumber \\
&=&
a_1 \ldots a_i  Q(a_i,Q(a_{i-1}, \ldots Q(a_2, Q(a_1,q))\ldots )).  \label{seq22}
\end{eqnarray}
Define
$L_0$ to be the finite set
$$L_0 = \{1 ,l_1,\ldots,l_n\} \cup L.$$
For $n
\in \N^k_0$ set
\begin{eqnarray*}
L_{n} &=& L_0 \cup \{\, Q(a_i,Q(a_{i-1}, \ldots Q(a_2,
Q(a_1,q))\ldots
))  \in \calp \, | \\
&& i \in \N, \,a_1,\ldots,a_i \in A,\, |a_1|+ \ldots +|a_i| \le n,\,
q \in L_0 \,\}.
\end{eqnarray*}
Define $\Pi_n$ to be the set of all finite products of elements of $L_n$.
That is, an element of $\Pi_n$ is a finite product of projections which arise as projections from $L_0$ which then skip at most $n$ letters of $A$ (which is the letter set for the words in $F$).
Notice that since $L_0$ is a finite set, $L_n$ is a finite set. Thus, since $\calp$ is a commuting set, $\Pi_n$ is a finite set. 
Note also that $(R_n)_n$ and $(\Pi_n)_n$
are families of sets which increase in size.
Now define
$$H= \set{\alpha q \in \calt_1 \calp}{
\alpha \in F,\, q \in \Pi_{|\alpha|}} \backslash \{1\}.$$ Then $H$
is a finite set. It contains the set $D$ as $x_i \in E \subseteq F$ and $l_i \in L_0 \subseteq \Pi_{|x_i|}$ for every $1 \le i \le n$. This is the desired $H$ which appears in (\ref{exprG}).
Define $G$ as in (\ref{exprG}).

We aim to check that the requirements stated in Proposition
\ref{centralAFproposition} for a set $G$ there hold also for this $G$. Let us be given
$$g=
\alpha_1 q_1 q_2 \alpha_2^* \in G \qquad \mbox{and} \qquad h = \alpha_3 q_3 q_4
\alpha_4^* \in G,$$
where $\alpha_1 q_1 ,\ldots , \alpha_4 q_4 \in H$
with $\alpha_1,\ldots,\alpha_4 \in F$, $|\alpha_1|=| \alpha_2|$,
$|\alpha_3|=| \alpha_4|$, $q_1,q_2 \in \Pi_{|\alpha_1|}$ and
$q_3,q_4 \in \Pi_{|\alpha_3|}$. Then
\begin{eqnarray}
\nonumber
g h  &=& (\alpha_1 q_1 q_2 \alpha_2^* )(\alpha_3 q_3 q_4 \alpha_4^*)\\
\nonumber &=& \sum_{(x , y) \in \calt_1^{(\min)}(\alpha_2,\alpha_3)}
\alpha_1 q_1 q_2  x  q_{\alpha_2,\alpha_3,x,y}  y^*   q_3 q_4 \alpha_4^*\\
\label{prodghresult} &=& \sum_{(x , y) \in
\calt_1^{(\min)}(\alpha_2,\alpha_3)} \alpha_1 x q_1^{(x)}
q_3^{(y)}  y^* \alpha_4^*
\end{eqnarray}
where $q_1 q_2 x = x \tilde q_1$ for some $\tilde q_1 \in
\Pi_{|\alpha_1|+|x|}$ by (\ref{seq22}) (because $q_1 q_2$ consists of elements which at most skipped $|\alpha_1|$ letters of $A$, so $\tilde q_1$ consists of elements which at most skipped $|\alpha_1|+|x|$ letters of $A$), and where we have put $q_1^{(x)} = \tilde q_1
q_{\alpha_2,\alpha_3,x,y} \in \Pi_{|\alpha_1|+|x|}$.
Similarly we have $q_3
q_4 y = y q_3^{(y)}$ for some $q_3^{(y)} \in \Pi_{|\alpha_3|+|y|}$ by
(\ref{seq22}).

By condition (\ref{faclosurecondition}) of Lemma \ref{faclosure} we
have
\begin{equation}  \label{conclusionF}
\alpha_1 x \neq 0, \alpha_4 y \neq 0 \quad \Longrightarrow \quad
\alpha_1 x , \alpha_4 y \in F \quad \Longrightarrow \quad \alpha_1 x
q_1^{(x)}, \alpha_4 y q_3^{(y)} \in H.
\end{equation}
We have seen that $G$ is a finite, self-adjoint set such that for
all $g,h \in G$, $gh$ is the sum of certain elements in $G$ as we can see from expression (\ref{prodghresult}).
This fact proves the requirement (\ref{finiteM}) in Proposition \ref{centralAFproposition}.

By (\ref{prodghresult}) we have
\begin{eqnarray*}
(gh) (gh)^*  &=&
\sum_{( x' , y'),(x , y) \in
\calt_1^{(\min)}(\alpha_2,\alpha_3)}
\alpha_1 x q_1^{(x)} q_3^{(y)}  y^* \alpha_4^*
\alpha_4 y' q_3^{(y')} q_1^{(x')} x' \alpha_1\\
&=& \sum_{i=1}^m g_i g_i^*
\end{eqnarray*}
for some $g_i \in G$ as $y^* y' = \delta_{y}^{y'} Q_y$ in the above sum.
This exactly proves (\ref{alsoInZ}) of Proposition \ref{centralAFproposition}.


To prove (\ref{assumption1}) of Proposition
\ref{centralAFproposition}, we have to show that if $p,q$ are
minimal projections in $Z$ (the commutative algebra generated by the range projections of the elements of $G$), and $g,h \in G$ satisfy $p g q \neq 0$
and $p h q \neq 0$, then $p g h p$ is a projection.

We may assume $p gh p \neq 0$. We may write $g= \alpha_1 q_1 q_2
\alpha_2^* $ and $h =\alpha_3 q_3 q_4 \alpha_4^*$ as above. Then $g
h$ equals (\ref{prodghresult}). To analyse the sum
(\ref{prodghresult}), we consider $(x , y) \in
\calt_1^{(\min)}(\alpha_2,\alpha_3)$. Set $p_x = \alpha_1 x
q_1^{(x)} q_1^{(x)} x^* \alpha_1^*$. If $p_x\neq 0$ then $\alpha_1 x
\neq 0$ and so $p_x \in G$ by conclusion (\ref{conclusionF}). Thus $p_x = p_x p_x^*$ is an element of $Z$.
Note that the $p_x$'s
are mutually orthogonal for different $x$'s. Since $p$ is a minimal projection of $Z$, there is at most
one $x_0$ such that $p=p p_{x_0}  \neq 0$.
Comsequently, by (\ref{prodghresult}) we have
$$0 \neq p g h p = p p_{x_0} g h
p_{x_0} p = p p_{x_0} \alpha_1 x_0 q_1^{(x_0)} q_3^{(y_0)} {y_0}^*
\alpha_4^*  p_{x_0} p$$
$$ = p p_{x_0} \alpha_1 x_0 q_1^{(x_0)} q_3^{(x_0)} {x_0}^*
\alpha_1^*  p_{x_0} p = p p_{x_0} p_{x_0} p_{x_0} p = p$$
for $(x_0 , y_0) \in
\calt_1^{(\min)}(\alpha_2,\alpha_3)$, and 
where the facts ${y_0}^*
\alpha_4^*  p_{x_0} \neq 0$ and $|\alpha_1 x_0|=| \alpha_4 y_0|$ forces
the conclusion $\alpha_4 y_0 = \alpha_1 x_0$.
This shows that $p g h p$ is a projection.
We have proved that $G$ satisfies all the requirements stated in Proposition \ref{centralAFproposition},
and this was the claim.
\end{proof}

The next corollary is the main result of this section. The core is locally matrical
(i.e. the algebraic direct limit of finite dimensional $C^*$-algebras).

\begin{corollary} \label{corollarySemigraphMatrixunits}
The core is the union of a net of finite dimensional $C^*$-algebras,
each one allowing a matrix representation where each projection on
the diagonal is a finite sum of mutually orthogonal standard projections.
A $C^*$-representation of $X$ is injective on
the core if and only if it is non-vanishing on nonzero standard projections.
\end{corollary}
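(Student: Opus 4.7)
The plan is to build each member of the increasing net of finite-dimensional $C^*$-algebras exactly as in Lemma \ref{finiteprodsetXe}, and then read off the diagonal structure by unpacking what the minimal projections $p_x$ of Proposition \ref{centralAFproposition} look like in terms of standard projections.

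First I would observe that the core is spanned by elements of the form $bc^*$ with $b,c$ half-standard and $d(b)=d(c)$: any word $w$ in the core has $d(w)=0$, and by Lemma \ref{semigraphwordrep}(b) such a $w$ decomposes as $\sum_j b_j c_j^*$ with $d(b_j c_j^*)=0$, forcing $d(b_j)=d(c_j)$. Given a finite subset of the core, I collect the (finitely many) half-standard words appearing in such decompositions into a set $D$, apply Lemma \ref{finiteprodsetXe} to obtain $H\supseteq D$ and the corresponding set $G=\{xy^*\mid x,y\in H,\,d(x)=d(y)\}$, and conclude from Proposition \ref{centralAFproposition} that $M=\lin(G)$ is a finite-dimensional $C^*$-algebra containing our original finite subset, with matrix units $e_{xy}=p_x s_i p_y$. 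Taking the union over all finite subsets exhibits the core as the algebraic direct limit of such $M$'s.

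Next I would identify the diagonal projections $p_x$ with finite sums of mutually orthogonal standard projections. The key observation is that for $s=xy^*\in G$ one has $P_s = x(y^*y)x^*$, and $y^*y\in\calp$ by Lemma \ref{lemmaSourceProjHSword}, so $P_s$ is the range projection of the half-standard word $xy^*y$. Each minimal projection $p_x$ of the commutative algebra $Z$ generated by the $P_{s_i}$'s has the canonical form $\bigl(\prod_{i\in S}P_{s_i}\bigr)\bigl(\prod_{i\notin S}(1-P_{s_i})\bigr)$; applying Lemma \ref{lemmaproductprojection} to the first factor rewrites it as $\sum_k P_{w_k}$ with $w_k$ half-standard and $P_{w_k}$ mutually orthogonal, and distributing yields
\[
p_x \;=\; \sum_k P_{w_k}\prod_{i\notin S}(1-P_{s_i}),
\]
a finite orthogonal sum of standard projections. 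This is the main technical step; the mutual orthogonality of the summands follows from the orthogonality of the $P_{w_k}$'s, since the factor $\prod_{i\notin S}(1-P_{s_i})$ commutes with everything in $Z$.

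For the second assertion, the forward direction is immediate because every standard projection belongs to the core. For the converse, suppose $\pi$ is non-vanishing on every nonzero standard projection. Given any $M$ in the net, each nonzero diagonal projection $p_x$ is, by what was just shown, a finite orthogonal sum of standard projections; at least one such summand is nonzero, so $\pi(p_x)\ne 0$. Since $M$ is a finite-dimensional $C^*$-algebra with matrix units $e_{xy}$ as in Proposition \ref{centralAFproposition}, non-vanishing on every nonzero diagonal $e_{xx}=p_x$ forces non-vanishing on each minimal central projection of $M$ (each such is a sum of nonzero $p_x$'s from one equivalence class), hence $\pi|_M$ is injective. As the core is the union of these $M$'s, $\pi$ is injective on the core. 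The only delicate step in the whole argument is the decomposition of $p_x$ into orthogonal standard projections in paragraph two; everything else is assembled from the preceding lemmas.
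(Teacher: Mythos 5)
Your proposal is correct and follows essentially the same route as the paper's own proof: span the core by $xy^*$ with $x,y$ half-standard of equal degree via Lemma \ref{semigraphwordrep}, feed a finite subset into Lemma \ref{finiteprodsetXe} and Proposition \ref{centralAFproposition} to get the finite-dimensional $C^*$-algebras, rewrite each $P_{xy^*}$ as the range projection of the half-standard word $xQ_y$, and expand the positive part of each minimal projection $p_x$ by Lemma \ref{lemmaproductprojection} to exhibit it as an orthogonal sum of standard projections. The only difference is that you spell out the final injectivity argument (non-vanishing on diagonals forces non-vanishing on minimal central projections) slightly more explicitly than the paper does, which is harmless.
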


\begin{proof}
The core is the linear span of words with degree zero.
Thus, by Lemma \ref{semigraphwordrep}, the core is the linear span of words $x y^*$
where $x,y$ are half-standard words with degree $d(x)=d(y)$.
Let $f=\{x_1 y_1^*,\ldots,x_n y_n^*\}$ be a finite subset of the core with $d(x_i)=d(y_i)$.
Set $D=\{x_1,\ldots,x_n,y_1,\ldots,y_n\}$. Choose $G$ for $D$ according to Lemma \ref{finiteprodsetXe}.
Then $f \subseteq G$. The linear span of $G$ is a finite dimensional $C^*$-algebra
by Proposition \ref{centralAFproposition}. 
This finite dimensional $C^*$-algebra may be represented by a direct sum of matrices with diagonal 
entries $e_{x,x}=p_x$, where $p_x$ is a minimal projection of the commutative algebra generated
by the range projections $P_{g}$'s ($g \in G$).
Thus $p_x$ is a common refinement of such $P_g$'s, that means,
\begin{equation}  \label{p_xDarstellung}
p_x = P_{g_1}
\ldots P_{g_m} (1-P_{h_1}) \ldots (1-P_{h_l})
\end{equation}
for some $g_i,h_i \in G$.
Now an element of $G$ is of the form $x y^*$ ($x,y$ half-standard words), and so
$P_{x y^*} = x Q_y x^* = P_{z}$ for the half-standard word $z= x Q_y$.
Hence, if we expand $P_{g_1}
\ldots P_{g_m}$ in (\ref{p_xDarstellung}) according to Lemma \ref{lemmaproductprojection},
we see that $p_x$ is the orthogonal sum of standard projections.
This proves the first claim of the corollary.
The second claim is now clear, as a homomorphism defined on the core is injective
if it is non-vanishing on the nonzero standard  projections (thus non-vanishing on the matrix
diagonal entries).
%
\end{proof}

\section{The Cuntz--Krieger uniqueness theorem}
\label{SectionCuntzKriegerUniqueness}

In this section we are going to prove a Cuntz--Krieger uniqueness theorem
for a semigraph algebra. To this end we shall apply theorems
of our paper \cite{burgiCancelling}.

Let us recall what we need.
In \cite{burgiCancelling} we consider a $*$-algebra $X$
which is generated as a $*$-algebra by a subset $\cala$.
One has given an amenable group $G$.
One is equipped with a degree map $d$ assigning to each nonzero word in the letters of $\cala$
an element in $G$, such that
$d(vw) = d(v) d(w)$ and $d(v^*) = d(v)^{-1}$ when $w,v,wv \neq 0$.
The $*$-algebra $X$ together with these data $\cala,d$ and $G$ is called a balance system.

Let us anticipate that we shall apply this setting to a semigraph algebra $X$.
We define $G$ to be $\Z^k$, $\cala$ the standard words, and $d$ the degree map.

In \cite{burgiCancelling}, a criterion (C)$^*$ is given (explained below) which characterizes special
balance systems, which are then called cancelling systems.
If we have a cancelling system, and the word set is an inverse semigroup of partial
isometries then the cancelling system is even a so-called amenable cancelling system
(\cite{burgiCancelling}, Corollary 1).
Such a system satisfies the following uniqueness theorem
(\cite{burgiCancelling}, Theorem 2.1).

\begin{theorem}[\cite{burgiCancelling}, Theorem 2.1]   \label{theoremAmenableCancelling}
If $X$ is an amenable cancelling system then the universal $C^*$-representation $\pi:X \rightarrow C^*(X)$
(so $C^*(X)$ is the enveloping $C^*$-algebra)
is injective on the core, and actually this is the only existing $C^*$-representation
which is injective on the core (up to isomorphism).
\end{theorem}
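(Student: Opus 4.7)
My plan is to prove both assertions simultaneously via the standard gauge-invariant uniqueness strategy. The decisive ingredients will be a faithful conditional expectation $E:C^*(X)\to\overline{X_0}$ onto the closure of the core, together with a reduction of faithfulness on all of $C^*(X)$ to faithfulness on the core.

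\emph{Step 1: the gauge action and the expectation.} Because $G$ is amenable and $d$ is a multiplicative degree map with $d(v^*)=d(v)^{-1}$, the universal $C^*$-algebra $C^*(X)$ carries a strongly continuous action of the dual $\widehat{G}$, determined on words by $\hat\sigma_\chi(w)=\chi(d(w))\,w$. Averaging against the invariant mean on $\widehat{G}$ produces a norm-decreasing conditional expectation $E:C^*(X)\to\overline{X_0}$ which sends a word $w$ to itself if $d(w)=e$ and to $0$ otherwise.

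\emph{Step 2: faithfulness of $E$.} This is what I expect to be the hard part, and it is precisely the content of the cancelling condition (C)$^*$. For a nonzero positive element $a\in C^*(X)$ I would approximate $a$ by a finite linear combination $x=\sum_i\lambda_i w_i$ of letter-words; then (C)$^*$ supplies idempotents in the core that single out the degree-$e$ summands and annihilate the others. The hypothesis that the word set is an inverse semigroup of partial isometries is what ensures that source and range projections commute, so that these refining idempotents behave well under multiplication and adjoint; (C)$^*$ itself provides the actual combinatorial cancellation of non-zero-degree words against positive elements of the core.

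\emph{Step 3: from faithful expectation to uniqueness.} Let $\rho:X\to B$ be any $C^*$-representation which is injective on the core, and extend it to $\bar\rho:C^*(X)\to C^*(\rho(X))$. Since $\rho$ is manifestly $\widehat{G}$-equivariant on words, the gauge action descends to the image and yields an expectation $E'$ with $E'\circ\bar\rho=\bar\rho\circ E$. If $\bar\rho(a)=0$ for some $a\in C^*(X)$ then $\bar\rho(E(a^*a))=E'(\bar\rho(a^*a))=0$; by injectivity of $\bar\rho$ on $\overline{X_0}$ this forces $E(a^*a)=0$, and faithfulness of $E$ gives $a=0$. Applied with $\rho=\pi$ this yields injectivity of $\pi$ on the core (and in fact on all of $C^*(X)$); applied in general, it shows that any $C^*$-representation injective on the core is unique up to isomorphism, which is the second assertion.
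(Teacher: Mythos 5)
This statement is imported verbatim from \cite{burgiCancelling} (Theorem 2.1 there); the present paper gives no proof of it, so there is no internal argument to compare yours against, and I can only judge the proposal on its own terms. It follows the template of a \emph{gauge-invariant} uniqueness theorem, whereas the result to be proved is a \emph{Cuntz--Krieger} uniqueness theorem, and the difference is exactly where your argument breaks. The gap is in Step 3: you assert that for an arbitrary representation $\rho$ injective on the core ``the gauge action descends to the image and yields an expectation $E'$ with $E'\circ\bar\rho=\bar\rho\circ E$.'' The prescription $\bar\rho(w)\mapsto\chi(d(w))\,\bar\rho(w)$ is well defined only if $\ker\bar\rho$ is gauge-invariant, which is essentially the conclusion you are trying to reach; no gauge action on the target is part of the hypotheses, and if it were, the theorem would reduce to the much easier gauge-invariant uniqueness theorem and the cancelling condition would be superfluous. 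Step 2 is correspondingly misplaced: on the \emph{universal} algebra the expectation obtained by averaging over the compact group $\widehat{G}$ is automatically faithful (if $a\ge 0$ and $E(a)=0$, then continuity and positivity force $\sigma_\chi(a)=0$ for all $\chi$, hence $a=0$), so no combinatorial cancellation is needed there.

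The work your proposal never does is the norm estimate $\|\bar\rho(E(a))\|\le\|\bar\rho(a)\|$ for positive $a$ and an arbitrary $\rho$ injective on the core. This is where (C)$^*$ must enter: approximate $a$ by a finite linear combination of words, use the locally matricial structure of the core (Corollary \ref{corollarySemigraphMatrixunits}) to choose a projection $q$ in the relevant finite-dimensional subalgebra nearly attaining $\|E(a)\|$, and then use (C)$^*$ to pass to a subprojection of $q$ whose compression annihilates each of the finitely many words of nonzero degree while preserving the norm of the degree-zero part; only then does injectivity on the core propagate to all of $C^*(\rho(X))$. Two further defects: applying your Step 3 with $\rho=\pi$ to conclude that $\pi$ is injective on the core is circular, since injectivity on the core is the hypothesis of that step and must be established for $\pi$ separately (by exhibiting at least one representation injective on the locally matricial core and invoking universality); and the theorem is stated for a general amenable group $G$, for which the dual group $\widehat{G}$ and the averaging of Step 1 do not exist unless $G$ is abelian --- adequate for $G=\Z^k$ in this paper, but not for the cited theorem, where graded-algebra or Fell-bundle techniques are needed and the amenability hypothesis actually does its work.
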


If we can verify the condition (C)$^*$ for a semigraph algebra then it is a cancelling system.
It is then automatically an amenable system as the word set forms an inverse semigroup of partial isometries
(Lemma \ref{semigraphwordrep}).
Then the above theorem applies.

The criterion (C)$^*$ can now be formulated as follows:

There exists a subset $P$ of the core consisting of nonzero projections such that for any nonzero projection
$q$ in the core there is a projection $p$ in $P$ satisfying $p \precsim q$ (Murray--Von Neumann order).
There exists a subset $B$ of the algebra $X$ such that any word with nonzero degree can be expressed
as a linear combination of elements of $B$.
For every $x \in B$ and every $p \in P$ there is a $q \in P$ such that $q \le p$ and $q x q =0$.

We are going to introduce a definition which is designed to guarantee the validity of (C)$^*$.

\begin{definition}   \label{defCancel}
{\rm A semigraph algebra $X$ is called {\em cancelling} if for every
standard word $w$ with nonzero degree and every nonzero
standard projection $p$ there is a nonzero standard projection
$q$ such that $q \le p$ and $q w q = 0$.
}
\end{definition}

If $X$ is a cancelling semigraph algebra then it satisfies (C)$^*$. Indeed, define $B$
to be the standard words with nonzero degree, and $P$ the nonzero standard projections.
By Lemma \ref{semigraphwordrep} a word with nonzero degree may be expressed as a sum of words of $B$.
By Corollary \ref{corollarySemigraphMatrixunits}, any nonzero projection of the core
is larger or equal in Murray--Von Neumann order than a nonzero standard projection.
So (C)$^*$ is now evident.

Theorem \ref{theoremAmenableCancelling} thus yields the following Cuntz--Krieger uniqueness theorem.

\begin{theorem}[Cuntz--Krieger uniqueness theorem] \label{theoremamenablesemigraphsystem}
A cancelling semigraph algebra $X$ satisfies the following
uniqueness:

The universal representation $X \longrightarrow C^*(X)$ is injective on
the core, and so non-vanishing on the nonzero standard projections,
and up to
isomorphism this is the only existing representation of $X$ in a $C^*$-algebra which
is non-vanishing on nonzero standard projections and has dense image.
\end{theorem}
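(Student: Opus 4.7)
The plan is to reduce the theorem to Theorem \ref{theoremAmenableCancelling} by verifying that a cancelling semigraph algebra, viewed as a balance system in the sense of \cite{burgiCancelling}, is an amenable cancelling system. The amenability part comes for free: we take $G = \Z^k$ (which is abelian, hence amenable), $\cala$ the set of standard words, and $d$ the degree map, and recall from Lemma \ref{semigraphwordrep}(a) that the word set of $X$ is an inverse semigroup of partial isometries. So once we verify the criterion (C)$^*$ the amenable-cancelling hypothesis is automatic (by Corollary~1 of \cite{burgiCancelling}, as stated in the paragraph preceding the theorem), and Theorem \ref{theoremAmenableCancelling} applies.

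First I would set up the data required by (C)$^*$. Let $B$ be the set of standard words with nonzero degree and let $P$ be the set of nonzero standard projections; both lie in $X$ and $P$ lies in the core $X_0$ since standard projections have degree $0$. I then need to check the three conditions of (C)$^*$ one by one. The condition that every word of nonzero degree is a linear combination of elements of $B$ is immediate from Lemma \ref{semigraphwordrep}(b), since the formula $w = \sum_j b_j c_j^*$ with $d(b_j c_j^*) = d(w) \neq 0$ expresses $w$ as a sum of standard words of nonzero degree. The condition on the Murray--Von Neumann domination is supplied by Corollary \ref{corollarySemigraphMatrixunits}: any nonzero projection of the core lies in some finite dimensional matrix subalgebra whose diagonal minimal projections are finite orthogonal sums of nonzero standard projections, so any nonzero projection in the core dominates (in Murray--Von Neumann order) at least one nonzero standard projection. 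Finally, the third bullet of (C)$^*$ --- that for every $x \in B$ and every $p \in P$ there exists $q \in P$ with $q \le p$ and $q x q = 0$ --- is exactly the content of Definition \ref{defCancel}.

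Having verified (C)$^*$, Theorem \ref{theoremAmenableCancelling} gives that the universal $C^*$-representation $\pi : X \to C^*(X)$ is injective on the core, and is up to isomorphism the unique $C^*$-representation of $X$ which is injective on the core. To transform this into the formulation of Theorem \ref{theoremamenablesemigraphsystem}, I invoke the second statement of Corollary \ref{corollarySemigraphMatrixunits}: a $C^*$-representation of $X$ is injective on the core if and only if it does not vanish on any nonzero standard projection. This converts "injective on the core" into "non-vanishing on nonzero standard projections" in both directions, yielding that $\pi$ is non-vanishing on such projections, and that any representation with dense image non-vanishing on nonzero standard projections must be isomorphic to $\pi$.

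I do not expect any genuine obstacle in this proof: the cancellation definition was precisely designed to match the third clause of (C)$^*$, and all other ingredients (the inverse semigroup structure, the local matricial description of the core via standard projections, the linear spanning of nonzero-degree words by standard words) are already established in the earlier sections. The only care required is the translation between "injectivity on the core" and "non-vanishing on standard projections," which rests entirely on Corollary \ref{corollarySemigraphMatrixunits} and the fact that the standard projections sit as diagonal summands of the matrix units of the approximating finite dimensional $C^*$-subalgebras.
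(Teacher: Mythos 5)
Your proposal is correct and follows essentially the same route as the paper: verify (C)$^*$ with $B$ the nonzero-degree standard words and $P$ the nonzero standard projections (via Lemma \ref{semigraphwordrep}, Corollary \ref{corollarySemigraphMatrixunits} and Definition \ref{defCancel}), obtain amenability from the inverse semigroup structure of the word set, and then apply Theorem \ref{theoremAmenableCancelling} together with the core/standard-projection equivalence of Corollary \ref{corollarySemigraphMatrixunits}. No discrepancies with the paper's argument.
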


We used here also the fact that a representation is injective on the core if and only
if it is non-vanishing on nonzero standard projections (Corollary \ref{corollarySemigraphMatrixunits}).

\section{The quotient of a semigraph algebra}
\label{SectionSemigraphQuotient}

The following lemma tells us that a semigraph $X$ divided by a subset of the fiber space
(Definition \ref{defFiberSpaceX})
is a semigraph algebra again.

\begin{lemma}   \label{lemmaQuotientSemigraphAlgebra}
Let $X$ be a semigraph algebra and $Y$ the quotient of $X$ by a
subset of the fiber space of $X$.
Let $f: X \longrightarrow Y$ be the quotient map. Then $Y$ is a
semigraph algebra for the new generator sets
$\calp_{\rm new } = f(\calp)$ and $\calt_{\rm new} = f(\calt) \backslash \{0\}$.
The restriction
\begin{equation}  \label{restrictionQuotientMapF}
f|_{f^{-1}(\calt_{\rm new})}:{f^{-1}(\calt_{\rm new})} \longrightarrow \calt_{\rm new}
\end{equation}
is a bijection.
\end{lemma}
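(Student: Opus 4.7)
The plan is to verify each of the six conditions of Definition \ref{DefSemigraphAlgebra} for $Y$ with generators $\calp_{\rm new}$ and $\calt_{\rm new}$, and then to establish the bijection. The key step, and the main obstacle, is the bijection itself: once in hand, every other condition transfers along it with modest bookkeeping.

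\emph{Conditions (i), (ii), (vi).} Condition (i) is immediate since $f$ is a $*$-homomorphism preserving commutativity and products. For (ii), the partial isometry relation transfers under $f$, and if $f(s)f(t)=f(st)\neq 0$ then $st\neq 0$ in $X$, so $st\in \calt$ by property (ii) for $X$, whence $f(st)\in \calt_{\rm new}$. For (vi), composing the quotient maps $\F\to X\to Y$ realises $Y$ as a quotient of $\F$; because the fiber space of $X$ is by definition the image of the fiber space of $\F$, the generators of $\ker(X\to Y)$ lift to fiber-space elements of $\F$, which combined with the fiber-space generators of $\ker(\F\to X)$ yield fiber-space generators of $\ker(\F\to Y)$. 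Lemma \ref{lemmadegreemap} then equips $Y$ with a degree map on nonzero words.

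\emph{The bijection (main obstacle).} Suppose $s,t\in\calt$ satisfy $f(s)=f(t)\neq 0$; I will show $s=t$. Since $Y$ admits a gauge action (by (vi) for $Y$, which we have just established) and $f$ intertwines the gauge actions on $X$ and $Y$, $\ker f$ is gauge-invariant in $X$; the averaging argument of Lemma \ref{lemmaGaugeActions} then gives $\ker f=\bigoplus_{n\in\Z^k}(\ker f\cap X_n)$. If $d(s)\neq d(t)$, then $s$ and $-t$ lie in distinct fibers, forcing both into $\ker f$ and contradicting $f(s)\neq 0$. If $d(s)=d(t)$ and $s\neq t$, Lemma \ref{lemmaSemigrafElementary} (iv) gives $s^{*}t=0$; applying $f$ yields $f(s)^{*}f(s)=f(s)^{*}f(t)=f(s^{*}s)=0$, but then $f(s)=f(s)f(s^{*}s)=0$, again a contradiction. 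Surjectivity onto $\calt_{\rm new}$ is by definition.

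\emph{Condition (iii).} Using the bijection, define $d_{\rm new}(f(t)):=d(t)$; this is well-defined. Unique factorisation transfers: given $f(t)\in\calt_{\rm new}$ with $d(f(t))=n_1+n_2$, the unique $\calt$-factorisation $t=t_1t_2$ with $d(t_i)=n_i$ pushes to $f(t)=f(t_1)f(t_2)$ with both factors nonzero, and any competing factorisation $f(t)=f(s_1)f(s_2)=f(s_1s_2)$ lifts (via the bijection) to $s_1s_2=t$ in $\calt$, forcing $s_i=t_i$. Non-unitality is automatic since $d(f(t))=d(t)>0$. For finite alignment, every $(\alpha,\beta)\in\calt_{\rm new,1}^{(\min)}(f(x),f(y))$ lifts along the bijection to $(\alpha',\beta')\in\calt_1^{(\min)}(x,y)$ (the lifted products $x\alpha'$ and $y\beta'$ are nonzero in $\calt_1$ since their $f$-images are nonzero and equal, so the bijection identifies them), and $\calt_1^{(\min)}(x,y)$ is finite by hypothesis.

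\emph{Conditions (iv) and (v).} These transfer by pushing the relations of $X$ through $f$: for (iv), $px=xq$ in $X$ yields $f(p)f(x)=f(x)f(q)$ in $Y$. For (v), applying $f$ to (\ref{defExpansion}) gives an expansion of $f(x)^{*}f(y)$ indexed by $\calt_1^{(\min)}(x,y)$; summands whose lift $(\alpha',\beta')$ satisfies $f(x\alpha')=0$ produce vanishing contributions after left-multiplication by $f(x)f(x)^{*}$, and the surviving summands biject (again by the bijection) with $\calt_{\rm new,1}^{(\min)}(f(x),f(y))$, giving the required expansion with coefficients in $\calp_{\rm new}$.
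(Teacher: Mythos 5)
Your proposal is correct and follows essentially the same route as the paper: realise $Y$ as a quotient of $\F$ by fiber-space elements so that Lemma \ref{lemmadegreemap} supplies the degree map, deduce injectivity of $f$ on $\calt$ from $d(s)=d(t)$ and $s^*t=0$ via Lemma \ref{lemmaSemigrafElementary} (iv), and then transfer conditions (i)--(v) through $f$, discarding the summands of (\ref{defExpansion}) killed by the source projection of $f(x)$ (note it is $f(x)^*f(x)$, not $f(x)f(x)^*$, that acts as the left unit there). Your extra case analysis for $d(s)\neq d(t)$ via the fiber decomposition of $\ker f$ is a harmless elaboration of what the paper gets for free from the well-definedness of the degree on nonzero words of $Y$.
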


\begin{proof}
Since by Definition \ref{DefSemigraphAlgebra} (vi) $X$
is a quotient of $\F$ by a subset of the fiber space of $\F$,
and $Y$ is a quotient of a subset of a fiber space of $X$,
$Y$ may also be realised as a quotient of a subset of the fiber space of $\F$.
Hence, by Lemma \ref{lemmadegreemap}
$Y$ is
endowed with a degree map defined on the nonzero words of $Y$.
Since the gauge actions on $X$ and $Y$ are essentially identic, their degree maps are also essentially identic. 

In particular, $\calt_{\rm new}$ is endowed with a degree map
$d(f(t)) = d(t)$ for $t \in \calt$, $f(t) \neq 0$. To prove that (\ref{restrictionQuotientMapF}) is injective
(it is surely surjective),
suppose that $f(s) = f(t) \neq 0$ for $s,t \in
\calt$, and $s \neq t$. Then $s^* t = 0$ since $d(s)= d(f(s))=
d(f(t)) =d(t)$. Hence $f(s^* s)= f(s^* t)=0$, and so $f(s)= f(s s^*
s) = 0$, which is a contradiction.
Using this injectivity, it is now
easy to check that $(\calt_{\rm new})_1$, which is isomorphic to $(f^{-1}(\calt_{\rm new}))_1 \subseteq \calt_1$, is, as $\calt_1$, a
semigraph.

We are going to prove that $Y$ is a semigraph algebra. Definition \ref{DefSemigraphAlgebra} (vi) is verified
for $Y$. Definitions \ref{DefSemigraphAlgebra} (i)-(iv) are obvious.
It remains to check Definition \ref{DefSemigraphAlgebra} (v).
Suppose that $x,y,\alpha,\beta \in \calt_1$ and $f(x) f(\alpha)=f(y)f(\beta) \neq 0$.
Then $f(x \alpha) = f(y \beta) \neq0$. By injectivity of (\ref{restrictionQuotientMapF}),
$x \alpha = y \beta$.
Hence one has
\begin{equation} \label{formulaf}
(\calt_{\rm new})_1^{(\min)}(f(x),f(y)) =
\set{(f(\alpha),f(\beta))}{(\alpha,\beta) \in \calt_1^{(\min)}(x,y),
\,f(x) f(\alpha) \neq 0}
\end{equation}
for $x,y \in \calt_1$ with $f(x) \neq 0$ and $f(y)\neq 0$. In particular,
$(\calt_{\rm new})_1$ is finitely aligned. 
Applying the map $f$ to identity (\ref{defExpansion}) of Definition
\ref{DefSemigraphAlgebra} (v) we get
\begin{equation}  \label{formulaf2}
f(x)^* f(y) = \sum_{(\alpha,\beta) \in \calt_1^{(\min)}(x,y)}
f(\alpha) f(q_{x,y,\alpha,\beta}) f(\beta)^*.
\end{equation}
Since the left hand side of (\ref{formulaf2}) has the left unit $f(x)^* f(x)$, this must also be a left unit for the right hand side of (\ref{formulaf2}).
Imaging putting this unit before the sum in (\ref{formulaf2}), we see that the summands satisfying $f(x) f(\alpha) = 0$
vanish. So we drop these $\alpha$'s and end up with
$$f(x)^* f(y) = \sum_{(\alpha,\beta) \in \calt_1^{(\min)}(x,y),\, f(x) f(\alpha) \neq 0}
f(\alpha) f(q_{x,y,\alpha,\beta}) f(\beta)^*.$$
By (\ref{formulaf}) this verifies Definition
\ref{DefSemigraphAlgebra} (v) for $Y$.
%
%
\end{proof}

\section{Full semigraph algebras}
\label{SectionFullSemiAlgebra}

The aim of this section is to associate to a given semigraph algebra $X$ a further semigraph algebra $X_\|$
by adding relations to $X$ which are counterparts to the relation $s_1 s_1^* + s_2 s_2^*=1$
in the Cuntz algebra $\calo_2$.


\begin{definition}
{\rm
Write $\calt_1^{(\infty)}$ for the set of all increasing sequences in $\calt_1$. That means, an element $x \in \calt_1^{(\infty)}$
is a function $x:\N_0^k  \rightarrow \calt_1$ such that $d(x_n) = n$ and $x_{n_2}(0,n_1) = x_{n_1}$
for all $n_1 \le n_2$
($n_1,n_2 \in \N_0^k$).
}
\end{definition}

We may interpret an increasing sequence $x \in \calt_1^{\rm (min)}$ as an infinite path in $\calt_1$.

\begin{definition}    \label{definitionFull}
{\rm
Define $\calo$ to be the set of all standard projections $p \in X$
for which for every increasing sequence $x \in \calt_1^{(\infty)}$
one has $p x_n =0$ eventually for some $n$.
%

Then the {\em full semigraph
algebra} $X_\|$ associated to $X$ is the $*$-algebraic quotient of $X$ by $\calo$.
}
\end{definition}


Since $p x_n$ is a partial isometry with norm $1$, $\lim_n p x_n=0$ is equivalent to saying
that $p x_n =0$ eventually
(or to $p P_{x_n}= 0$ eventually).


The idea behind fullness is to add for every coordinate $i \in k$ the formal relation
$``1=\sum_{a \in \calt^{(e_i)}} P_a"$
to the semigraph algebra $X$. (This is Cuntz' relation in the Cuntz algebra \cite{cuntz} that the sum of the range projections of the generators is the unit.) This may however be an infinite sum, and so the meaning must be specified.
With these relations we get
$$``1 = \sum_{|a|= e_i} P_a = \sum_{|a|= e_i} \sum_{|b|= e_j} a b b^* a^*
= \ldots = \sum_{\alpha \in \calt^{(n)}} P_\alpha.$$
Thus an element $p$ in $X$ seems to vanishes if and only
if $p 1 = 0$ if and only if there is an $n \in \N^n_0$ such that $p P_\alpha=0$ for all $\alpha \in \calt_1^{(n)}$.
This condition is however somewhat too strong, and
so we heuristically think of the limits of the range projections
$P_{x_n}$ of elements $x \in \calt_1^{(\infty)}$ as the spectrum of a
commutative algebra generated by all range projections $P_\alpha$ ($\alpha \in \calt_1$).
Elements $x$ in the spectrum correspond to limits $x = \lim P_{x_n}$.
So we declare $p$ to be zero if the evaluation on the
spectrum is zero everywhere, that is, if $\lim_n p P_{x_n} = 0$ (equivalently $p x_n=0$ eventually)
for all $x
\in \calt_1^{(\infty)}$.
This is what we do in Definition \ref{definitionFull}.

In the next lemma
we shall show that the quotient of $X$ by $\calo$
is indeed a semigraph algebra and that it is indeed full in the sense that
we get nothing new if we consider the full semigraph algebra of this quotient again.

\begin{lemma} \label{lemmafullsystem}
\begin{itemize}
\item[(a)]
$X_\|$ is a semigraph algebra, and ${(X_\|)}_\| = X_\| \,.$

\item[(b)]
If $p$ is a standard projection in $X$ then $p$ vanishes in $X_\|$ if and only if
$p \in \calo$.

\item[(c)]
If $t
\in \calt$ then $t$ vanishes in $X_\|$ if and only if $P_{t} \in \calo$.
\end{itemize}
\end{lemma}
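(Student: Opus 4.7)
The plan is to prove the three parts in the order (b), (c), (a), since (c) reduces immediately to (b) and the idempotence in (a) rests on both.

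For part (b), the ``if'' direction is immediate from the definition $X_\| = X/\langle\calo\rangle$. For the ``only if'' direction, I take a standard projection $p$ with $f(p)=0$, so $p \in \langle\calo\rangle$, and aim to show $p \in \calo$. Since $\calo$ sits in the degree-zero fiber, $\langle\calo\rangle$ is gauge-invariant by Lemma \ref{lemmaGaugeActions}, so the problem reduces to $p \in \langle\calo\rangle\cap X_0$ inside the core. Then Corollary \ref{corollarySemigraphMatrixunits} provides a finite-dimensional matrix subalgebra $M$ of the core containing $p$ together with enough generators of $\calo$ to witness the ideal membership; within $M$, the ideal restricts to a direct sum of matrix blocks whose diagonal units are sums of orthogonal standard projections from $\calo$, forcing $p \le r$ for some finite sum $r$ of elements of $\calo$. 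The hereditary property of $\calo$ (if $q$ is a standard projection with $q \le r \in \calo$, then $q x_n = q r x_n = 0$ eventually, so $q \in \calo$) then concludes $p \in \calo$.

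Part (c) follows directly from (b): if $f(t)=0$ then $f(P_t) = f(t)f(t)^* = 0$, and since $P_t = tt^*$ is a standard projection, part (b) gives $P_t \in \calo$. Conversely, if $P_t \in \calo$ then $f(P_t)=0$ and hence $f(t) = f(tt^*t) = f(P_t) f(t) = 0$.

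For part (a), the statement that $X_\|$ is a semigraph algebra is a direct application of Lemma \ref{lemmaQuotientSemigraphAlgebra}, as $\calo$ lies in the fiber space of $X$. For the idempotence $(X_\|)_\| = X_\|$ I show $\calo_\| \subseteq \{0\}$ inside $X_\|$. The computation $P_{x_n} x_n = x_n \neq 0$ gives $P_{x_n} \notin \calo$, and part (c) then yields $f(x_n) \neq 0$ for every $x \in \calt_1^{(\infty)}$ and every $n$; combined with the bijection in Lemma \ref{lemmaQuotientSemigraphAlgebra}, this produces a bijection $\calt_1^{(\infty)} \leftrightarrow ((\calt_\|)_1)^{(\infty)}$. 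Given $p' = f(p) \in \calo_\|$, the defining condition translates to $f(p y_n) = 0$ eventually for every $y \in \calt_1^{(\infty)}$. Using that standard projections commute (Lemma \ref{semigraphwordrep}(a)), one checks that $p y_n$ is a partial isometry in $X$ with $(p y_n)(p y_n)^* = p P_{y_n}$, whence $f(p y_n)=0$ is equivalent to $f(p P_{y_n}) = 0$. Decomposing $p P_{y_n}$ via Lemma \ref{lemmaproductprojection} into an orthogonal sum of standard projections and applying part (b) to each summand, every summand lies in $\calo$; exploiting the up-closedness of $\{n : p y_n = 0\}$ (which follows from $y_m = y_n \cdot y_m(n,m)$ for $n \le m$) together with the $\calo$-vanishing along the path $y$, one obtains $p y_n = 0$ eventually in $X$, hence $p \in \calo$ and finally $f(p)=0$ by (b).

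The main obstacle is the matricial step in (b): translating the abstract membership $p \in \langle\calo\rangle\cap X_0$ into the explicit domination $p \le \sum r_i$ with $r_i \in \calo$ requires a careful analysis of how an ideal generated by projections of the core decomposes inside a finite-dimensional matrix subalgebra provided by Corollary \ref{corollarySemigraphMatrixunits}. A secondary difficulty is the closing step of (a): passing from ``$p P_{y_n}$ is a sum of $\calo$-projections in $X$'' to ``$p y_n = 0$ in $X$ eventually'' demands combining the partial-isometry structure of $p y_n$, the up-closedness of the vanishing set, and the degree compatibility of the orthogonal summands furnished by Lemma \ref{lemmaproductprojection}.
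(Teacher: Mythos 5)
The weak point is the ``only if'' direction of (b), which is exactly where the paper's proof does its real work. You reduce $p\in\langle\calo\rangle$ to a statement inside a finite dimensional subalgebra $M$ of the core, but the ideal membership $p=\sum_i\alpha_i v_i q_i w_i$ (with $q_i\in\calo$) is witnessed by words $v_i,w_i$ of \emph{nonzero} degree --- only the sums $d(v_i)+d(w_i)$ vanish --- so the conjugators do not live in the core, let alone in $M$. Consequently $p$ need not lie in the ideal of $M$ generated by $\calo\cap M$; compare a graph algebra, where a vertex projection can lie in the ideal generated by a different vertex projection even though the two are orthogonal inside the diagonal, so no commutative or core subalgebra can see that membership. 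If instead by ``the ideal'' you mean $\langle\calo\rangle\cap M$, then it is indeed a sum of matrix blocks whose central unit dominates $p$, but your assertion that its diagonal units are sums of orthogonal standard projections \emph{from $\calo$} is precisely the statement being proved (namely that a standard projection lying in $\langle\calo\rangle$ lies in $\calo$); as written the step is circular. Your hereditarity observation ($q=qr$ with $r$ a finite sum of elements of $\calo$ forces $q\in\calo$) is correct, but you never legitimately produce such an $r$. The paper avoids all of this by a direct computation: write $p=\sum_i\alpha_i v_ip_iw_i$ with the $w_i=s_iq_it_i^*$ standard, and check term by term that $p_iw_ix_n$ vanishes eventually, because $t_i^*x_n$ either dies or converts $x$ into a shifted increasing sequence $n\mapsto s_i x(d(t_i),n)q_{i,n}$ to which the defining property of $p_i\in\calo$ applies. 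Some such degree-shifting argument is unavoidable, and your matricial detour does not substitute for it.

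Parts (c) and (a) follow the paper's route and are fine modulo the dependence on (b). Two minor points on (a): Lemma \ref{lemmaproductprojection} treats products $P_{v_1}\cdots P_{v_n}$ and does not directly cover the factors $(1-P_{y_j})$ occurring in a standard projection; the paper instead computes $pP_{x_n}=x_nex_n^*$ for large $n$, exhibiting it as a \emph{single} standard projection, which is the cleaner way to invoke (b). Your explicit treatment of the bijection $\calt_1^{(\infty)}\leftrightarrow((\calt_{\rm new})_1)^{(\infty)}$, via $P_{x_n}x_m=x_m\neq 0$ and part (c), is a detail the paper glosses over and is a genuine improvement in rigour.
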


\begin{proof}
$X_\|$ is a semigraph algebra as it is the quotient of the semigraph algebra $X$ by a subset of the core, which is in the fiber space (Lemma \ref{lemmaQuotientSemigraphAlgebra}). We denote the
equivalence class of $x \in X$ in $X_\|=X /\langle \calo \rangle$ by $[x]$.
We are going to prove (b). Suppose that $p$
is a standard projection and $[p]= 0$.
Then there are elements $p_i \in \calo$, scalars $\alpha_i \in \C$,
and words $v_i, w_i$ such that
\begin{equation}   \label{psum}
p = \sum_{i=1}^\kappa \alpha_i v_i p_i w_i.
\end{equation}
Since by Lemma
\ref{semigraphwordrep} every word may be written as a sum of standard words,
we may assume that the $v_i$ and $w_i$'s are standard words.
Say that $w_i = s_i q_i t_i^*$ for $s_i,t_i \in \calt_1$ and $q_i \in \calp$.
Let $x \in \calt_1^{(\infty)}$.
If $d(x_n) \ge d(t_i)$ then either $t_i^* x_n = 0$, or $t_i^* x_n \neq 0$ in which case $x_n(0,d(t_i))= t_i$.
Hence, $w_i x_n = 0$ eventually for some $n$, or
$$p_i w_i x_n = p_i s_i q_i t_i^*  x_n = p_i s_i q_i x(d(t_i),n) = p_i s_i x(d(t_i),n) q_{i,n},$$
which is also vanishing eventually for some $n$ as $p_i \in \calo$
(here $q_i$ skips $x(d(t_i),n)$ and becomes $q_{i,n} \in \calp$ by Definition \ref{DefSemigraphAlgebra} (iv)).
Hence, by (\ref{psum}), $p x_n = 0$ eventually. Since $x$ was arbitrary, $p \in \calo$ by Definition
\ref{definitionFull}. 

We are going to show that ${(X_\|)}_\| = X_\|$.
To this end we need to show that $\calo_{\|}$ (i.e. $\calo$ with respect to $X_\|$)
is $\{0\}$.
Let $[p]$ be a standard projection in $X_\|$ ($p$ denoting a standard projection in $X$).
Suppose that $[p]$ is in $\calo_\|$. 
Then by Definition \ref{definitionFull} for every $x \in \calt_1^{(\infty)}$ $[p][x_n]=0$ eventually for some $n$.
For simplicity let us assume that $p=a q a^* (1-b b^*)$ for some $a,b \in \calt_1$ and $q \in \calp$.
Let $x \in \calt_1^{(\infty)}$.
If $d(x_n) \ge d(a)$ then $x_n x_n^* a q a^* = 0$, or $x_n x_n^* a q a^* = x_n q' x_n^*$ for some $q' \in \calp$
satisfying $q x_n(0,|a|) = x_n (0,|a|) q'$.
Hence
$$p x_n x_n^* = x_n q' x_n^* (1-b b^*)$$
is a standard projection for all $n \ge d(a)$.
Thus, since also $[p x_n x_n^*] =0$ for almost all $n$ (as $[p] \in \calo_\|$),
by Lemma \ref{lemmafullsystem} (b), which we have proved, $p x_n x_n^* \in \calo$ for almost all $n$.
Fix any such an $n$.
Then, $p x_n x_n^* x_m x_m^* = 0 =p x_m x_m^*$ for almost $m \ge n$.
Since $x$ was arbitrary, $p \in \calo$. Thus $[p] = 0$.


(c) follows from $[t] = 0$ if and only
if $[t t^*] = 0$ if and only if $t t^* \in \calo$ by (b).
\end{proof}

\begin{definition}   \label{definitionFullness}
{\rm
A semigraph algebra $X$ is called {\em full} if $X= X_{\|}$.
}
\end{definition}

We shall introduce a condition for a semigraph algebra called aperiodicity which implies that the
semigraph algebra is cancelling when it is also full.
The aperiodicity condition is more or less a condition directly for the underlying semigraph.

\begin{definition}  \label{definitionAperiodicity}
{\rm A semigraph algebra $X$ is called {\em aperiodic} if for
every {\em elementary} standard projection $e$, every $x \in \calt$ with $x e \neq 0$, and all
distinct $0 \le m,n \le d(x)$, there exists a $y$ in $\calt_1$ such that $x e y \neq 0$ and
\begin{equation}  \label{mce0aperiodic}
\calt_1^{(\min)}\Big ((xy) (m,d(xy)),(xy) (n,d(xy)) \Big) =
\emptyset.
\end{equation}
}
\end{definition}



\begin{proposition}  \label{fullaperiodicsystemamenable}
An aperiodic full semigraph algebra is cancelling.
\end{proposition}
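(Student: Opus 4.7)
The plan is to combine fullness — which yields an infinite path through $p$ — with aperiodicity — which breaks the prefix overlap that a word of nonzero degree would otherwise need to survive a sandwich. Given a standard word $w = s p_0 t^*$ with $d(w) \ne 0$ and a nonzero standard projection $p$, I will build $q$ in the form $q = p P_b = P_b p$ for a suitably chosen half-standard word $b$; the commutation here is supplied by Lemma \ref{semigraphwordrep}. Since $X$ is full, Lemma \ref{lemmafullsystem}(b) forces $p \notin \calo$; and the zero set $\{n : p\xi_n = 0\}$ of any path is upward closed in $\N_0^k$ by unique factorisation, so Definition \ref{definitionFull} supplies $\xi \in \calt_1^{(\infty)}$ with $p\xi_n \ne 0$ for all $n$. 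Pick $N$ large so that $d(\xi_N)$ dominates $d(s)$ and $d(t)$ componentwise, and set $a = \xi_N \in \calt$.

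The next step is to rewrite $pa$ as $ae$ with $e$ an elementary standard projection, i.e.\ an element of $\P(\calp)$. Writing $p = P_x \prod_j (1 - P_{y_j})$ with $x$ and the $y_j$ half-standard, and exploiting Definition \ref{DefSemigraphAlgebra}(iv) to absorb each letter of $a$ into $\calp$, each $P_{y_j} a$ is either zero or of the form $a r_{y_j}$ for a single $r_{y_j} \in \calp$, and likewise $P_x a = a r_x$ for some $r_x \in \calp$. Assembling the pieces yields $pa = ae$ with $e := r_x \prod_j (1 - \tilde r_{y_j}) \in \P(\calp)$; and $ae \ne 0$ because $pa \ne 0$, so in particular $e \ne 0$.

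Aperiodicity is now invoked with the quadruple $(a, e, d(t), d(s))$: the positions $d(t)$ and $d(s)$ are distinct since $d(w) \ne 0$, and both are bounded by $d(a)$. It delivers $y \in \calt_1$ with $aey \ne 0$ — equivalently $p(ay) \ne 0$ — and
\[
\calt_1^{(\min)}\bigl((ay)(d(t),d(ay)),\,(ay)(d(s),d(ay))\bigr) = \emptyset.
\]
Set $b = ay$. Then $qb = pb \ne 0$ shows $q \ne 0$, while $q \le p$ by construction. It remains to prove $P_b w P_b = 0$, for then
\[
qwq = P_b p \cdot w \cdot P_b p = p \cdot P_b w P_b \cdot p = 0
\]
after using $P_b p = p P_b$ to bring the $P_b$'s adjacent to $w$. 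If $s$ or $t$ fails to be a prefix of $b$, Definition \ref{DefSemigraphAlgebra}(v) immediately kills the expression via $b^* s = 0$ or $t^* b = 0$. Otherwise, writing $u = b(d(t), d(b))$ and $v = b(d(s), d(b))$, the same definition gives $b^* s = q_1 v^*$ and $t^* b = u q_2$ with $q_1, q_2 \in \calp$; sliding $p_0$ past $v^*$ through Definition \ref{DefSemigraphAlgebra}(iv) reduces $b^* w b$ to a product containing the factor $v^* u$. The aperiodicity conclusion combined with the symmetry $\calt_1^{(\min)}(u,v) = \emptyset \Leftrightarrow \calt_1^{(\min)}(v,u) = \emptyset$ (immediate by the swap $(\alpha,\beta) \leftrightarrow (\beta,\alpha)$ in the definition) then forces $v^* u = 0$ via Definition \ref{DefSemigraphAlgebra}(v).

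The main obstacle is the bookkeeping in the second paragraph: checking that, however complicated the standard projection $p$ is (with its $1 - P_{y_j}$ factors), a sufficiently long prefix $a$ of $\xi$ absorbs everything from $p$ into a single \emph{elementary} standard projection $e \in \P(\calp)$, so that aperiodicity — stated only for elementary projections — can be invoked. The remaining computations are a patient application of Definition \ref{DefSemigraphAlgebra}(iv)--(v).
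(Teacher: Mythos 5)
Your argument is essentially the paper's own proof: fullness yields an infinite path $\xi$ with $p\xi_n\neq 0$ for all $n$, a long prefix $a=\xi_N$ absorbs $p$ into an elementary standard projection via $pa=ae$, aperiodicity applied to $(a,e,d(t),d(s))$ produces $b=ay$, and $q=pP_b=P_{aey}$ kills $w$ because the empty minimal common extension forces $b(d(s),d(b))^*\,b(d(t),d(b))=0$. The only adjustment needed is in your choice of $N$: for the absorption $pa=ae$ (which you correctly flag as the main bookkeeping step) you must also require $d(\xi_N)$ to dominate the degrees of the $\calt_1$-parts of the half-standard words occurring in $p$, not just $d(s)$ and $d(t)$ --- exactly as the paper takes $N>\max(|\alpha|,|\beta|,|t_0|,\ldots,|t_n|)$.
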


\begin{proof}
We will check that $X$ is cancelling (Definition \ref{defCancel}). Let $w$ be a standard word with
nonzero degree and $p$ a nonzero standard projection. We need to find a nonzero standard projection
$q$ such that $q \le p$ and $qwq=0$.
We may
write $w$ as $w = \alpha Q \beta$ for $Q \in \calp$ and $\alpha,\beta \in
\calt_1$ with $|\alpha| \neq |\beta|$. We may write
\begin{equation}   \label{seq56}
p= t_0 q_0 t_0^* (1- t_1 q_1 t_1^*) \ldots (1- t_n q_n t_n^*)
\end{equation}
for certain $t_i \in \calt_1$ and $q_i \in \calp$. Since $X$ is full, $p \notin \calo$. Thus there is an $x \in \calt_1^{(\infty)}$ such that $p x_i \neq 0$ for all $i \in \N_0^k$.
Fix any $N >
\max(|\alpha|,|\beta|,|t_0|,\ldots,|t_n|)$.
Then $p x_N x_N^* \neq 0$.
Note that for every $0 \le i \le n$, either $t_i q_i t_i^* x_N x_N^* = 0$ or
$t_i =x_N(0,|t_i|)$, in which case 
$$t_i q_i t_i^* x_N x_N^* = x_N q_i' x_N^*$$
for some $q_i' \in \calp$ by Definition \ref{DefSemigraphAlgebra} (iv) and (v).
Thus $p x_N x_N^*$ is something like
\begin{eqnarray}
&& x_N q_0' x_N^* (x_N x_N^* - x_N q_1' x_N^*) \ldots (x_N x_N^* - x_N q_n' x_N^*)  \nonumber\\
&=& x_N q_0' x_N^* x_N  (1 - q_1')  x_N^* x_N \ldots x_N^* x_N (1 - q_n') x_N^*  \nonumber \\
&=& x_N \big( q_0'  (1 -  q_1') \ldots (1 -  q_n') \big) x_N^*   \label{middleelem}\\
&=& x_N e x_N^*,   \label{middleelem2}
\end{eqnarray}
where $e$ denotes the elementary standard projection appearing in the middle of (\ref{middleelem}).

Since $x_N e \neq 0$, we may choose a $y
\in \calt_1$ by the aperiodicity condition such that $x_N e y \neq 0$ and
\begin{equation}   \label{minmiddlef}
\calt_1^{(\min)}(z(|\alpha|,|z|), z(|\beta|,|z|)) = \emptyset
\end{equation}
for $z= x_N y$.
Thus $0 \neq q:= P_{x_N e y} \le P_{x_N e} = p x_N x_N^* \le p$.
We may write
$$q= x_N e y y^* x_N^* = x_N y e' y^* x_N^* = z e' z^*$$
for some elementary standard projection $e'$ satisfying $e y= y e'$ by successive
application of Definition \ref{DefSemigraphAlgebra} (iv).
We then have
\begin{eqnarray*}
q w q &=&
z e' z^* \alpha Q \beta^* z e' z^*\\
&=&  z e' z(|\alpha|,|z|)^* Q_\alpha Q Q_\beta  z(|\beta|,|z|) e' z^*\\
&=& z e' z(|\alpha|,|z|)^*  z(|\beta|,|z|)  Q_\alpha' Q' Q_\beta'  e' z^*\\
&=& 0
\end{eqnarray*}
by (\ref{minmiddlef}), provided that $z^* \alpha \neq 0$ and $\beta^* z \neq 0$
(if not so, we obviously obtain zero anyway).
%
\end{proof}

\begin{lemma}
A representation of a full semigraph algebra is injective on the
core if and only if it is non-vanishing on elementary standard projections.
\end{lemma}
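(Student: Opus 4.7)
The ``only if'' direction is immediate: every elementary standard projection is a standard projection, so Corollary \ref{corollarySemigraphMatrixunits} gives the implication. The content lies in the ``if'' direction. My plan is to show that, under fullness, non-vanishing on nonzero elementary standard projections forces non-vanishing on nonzero standard projections, and then invoke Corollary \ref{corollarySemigraphMatrixunits} a second time.

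So let $\pi$ be a representation that does not vanish on any nonzero elementary standard projection, and let $p$ be a nonzero standard projection. Write $p=t_0 q_0 t_0^*(1-t_1 q_1 t_1^*)\cdots(1-t_n q_n t_n^*)$ with $t_i\in\calt_1$ and $q_i\in\calp$. Since $X$ is full, Lemma \ref{lemmafullsystem}(b) gives $p\notin \calo$, so there exists $x\in\calt_1^{(\infty)}$ with $p\,x_N\neq 0$ for every $N$.

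Fix $N>\max(|t_0|,\ldots,|t_n|)$. Repeating the calculation from the beginning of the proof of Proposition \ref{fullaperiodicsystemamenable} (using Definition \ref{DefSemigraphAlgebra}(iv) and (v) to push each $t_i q_i t_i^*$ past $x_N x_N^*$), I would obtain
$$p\, x_N x_N^* = x_N\, e\, x_N^*$$
for some elementary standard projection $e$ of the form $q_0'(1-q_1')\cdots(1-q_n')$ with $q_i'\in\calp$. Because $p\,x_N\neq 0$, the left-hand side is nonzero, hence so is $x_N e x_N^*$. Since $Q_{x_N}=x_N^*x_N\in\calp$ commutes with $e$, the element $f:=Q_{x_N}e=Q_{x_N}e\,Q_{x_N}$ lies in $\P(\calp)$ and equals $x_N^*(x_N e x_N^*)x_N$, which is nonzero. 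Thus $f$ is a \emph{nonzero} elementary standard projection.

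The conclusion is then a two-line contradiction: by hypothesis $\pi(f)\neq 0$, whereas if $\pi(p)=0$, applying $\pi$ to $p\,x_Nx_N^*=x_N e x_N^*$ and conjugating by $\pi(x_N^*)$ and $\pi(x_N)$ yields $\pi(f)=\pi(Q_{x_N}e\,Q_{x_N})=0$. The only step that demands any care is the identity $p\,x_Nx_N^*=x_N e x_N^*$; the rest is routine once one keeps track that $\calp$ is commuting and that $Q_{x_N}\in\calp$. I expect no other obstacle.
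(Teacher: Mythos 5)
Your proof is correct and follows essentially the same route as the paper: use fullness to produce an infinite path $x$ with $p\,x_N\neq 0$, rewrite $p\,x_Nx_N^*=x_N e x_N^*$ as in the proof of Proposition \ref{fullaperiodicsystemamenable}, and observe that $x_N^*p\,x_N=Q_{x_N}e$ is a nonzero elementary standard projection, giving the contradiction. The only (harmless) additions are your explicit treatment of the trivial ``only if'' direction and the justification that $Q_{x_N}e$ is nonzero and lies in $\P(\calp)$, which the paper asserts without comment.
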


\begin{proof}
Let $\pi$ be a representation which is non-vanishing on nonzero elementary standard projections.
By Corollary \ref{corollarySemigraphMatrixunits} we must show that
$\pi$ is non-vanishing on every nonzero standard projection $p$.
Assume that $\pi(p)=0$. We go into the proof
of Proposition \ref{fullaperiodicsystemamenable} again, and assume
(\ref{seq56}). Again, by fullness we have $p x_N x_N^* \neq 0$ for a
certain $x_N \in \calt$.
Then $p x_N x_N^* = x_N e x_N^*$, see (\ref{middleelem2}), and thus
$x_N^* p x_N=x_N^* ( p x_N x_N^*) x_N = x_N^* x_N e$ is a nonzero elementary standard projection.
Since $\pi(p)=0$, $0 = \pi(x_N^* p x_N) = \pi(x_N^* x_N e)$,
which contradicts the assumption that $\pi$ is non-vanishing on nonzero elementary
standard projections.
%
\end{proof}


\bibliographystyle{plain}
\bibliography{references}

\end{document}